\newtheorem{theorem}{Theorem}[section]
\newtheorem{proposition}[theorem]{Proposition}
\newtheorem{corollary}[theorem]{Corollary}
\newtheorem{remark}[theorem]{Remark}
\newenvironment{proof}{\smallskip\par{\sc Proof.}\enspace}%
 {{\unskip\nobreak\hfil\penalty50\hskip2em
          \hbox{}\nobreak\hfil{\rule[-1pt]{5pt}{10pt}}
          \parfillskip=0pt\finalhyphendemerits=0
          \par\medskip}} 
\def\section{\@startsection {section}{1}{\z@}{3.25ex plus 1ex minus
 .2ex}{1.5ex plus .2ex}{\large\bf}}
\def\subsection{\@startsection{subsection}{2}{\z@}{3.25ex plus 1ex minus
 .2ex}{1.5ex plus .2ex}{\normalsize\bf}}
\begin{document}

\begin{center}
\LARGE\textsf{An extension of the Beckner's type Poincar\'e inequality to convolution measures on abstract Wiener spaces}
\end{center}

\vspace*{.3in}

\begin{center}
\sc
\large{Paolo Da Pelo\footnote{Dipartimento di Matematica, Universit\'a degli Studi di Bari Aldo Moro, Via E. Orabona 4, 70125 Bari - Italia. E-mail: \emph{paolo.dapelo@uniba.it}}\quad Alberto Lanconelli\footnote{Dipartimento di Matematica, Universit\'a degli Studi di Bari Aldo Moro, Via E. Orabona 4, 70125 Bari - Italia. E-mail: \emph{alberto.lanconelli@uniba.it}}
\quad Aurel I. Stan\footnote{Department of Mathematics, Ohio State University at Marion, 1465 Mount Vernon Avenue, Marion, OH 43302, U.S.A. E-mail: {\em stan.7@osu.edu}}}\\
\end{center}

\vspace*{.3in}

\begin{abstract}
We generalize the Beckner's type Poincar\'e inequality \cite{Beckner} to a large class of probability measures on an abstract Wiener space of the form $\mu\star\nu$, where $\mu$ is the reference Gaussian measure and $\nu$ is  a probability measure satisfying a certain integrability condition. As the Beckner inequality interpolates between the Poincar\'e and logarithmic Sobolev inequalities, we utilize a family of products for functions which interpolates between the usual point-wise multiplication and the Wick product. Our approach is based on the positivity of a quadratic form involving Wick powers and integration with respect to those convolution measures. Our dimension-independent results are compared with some very recent findings in the literature. In addition, we prove that in the finite dimensional case the class of densities of convolutions measures satisfies a point-wise covariance inequality.   
\end{abstract}

\bigskip
\noindent \textbf{Keywords:} Beckner's type Poincar\'e inequality, Ornstein-Uhlenbeck semigroup, Wick product, convolution measures.

\bigskip
\noindent\textbf{Mathematics Subject Classification (2000):} 60H07, 60H30.

\allowdisplaybreaks

\section{Introduction}
In 1989 Beckner \cite{Beckner} proved the following inequality:
\begin{eqnarray}\label{beckner}
\int_{W}|f(w)|^2d\mu(w)-\int_{W}\big|e^{-\tau N}f(w)\big|^2d\mu(w)\leq (2-p)\int_{W}|Df(w)|^2d\mu(w)
\end{eqnarray}
where $1\leq p\leq 2$, $e^{-\tau}=\sqrt{p-1}$, $\mu$ is a standard Gaussian probability measure on the (possibly infinite dimensional) space $W$, $Df$ denotes a suitable gradient of $f$ and $N$ stands for the \emph{number} or \emph{Ornstein-Uhlenbeck} operator. Observe that when $p=1$ or equivalently $\tau=+\infty$ then (\ref{beckner}) coincides with the classic Poincar\'e inequality (\cite{Chernoff}, \cite{Nash}):
\begin{eqnarray*}
\int_{W}|f(w)|^2d\mu(w)-\Big(\int_{W}f(w)d\mu(w)\Big)^2\leq \int_{W}|Df(w)|^2d\mu(w).
\end{eqnarray*}
Moreover, utilizing the Nelson's hyper-contractive estimate \cite{Nelson}:
\begin{eqnarray*}
\int_{W}\big|e^{-\tau N}f(w)\big|^2d\mu(w)\leq\Big(\int_{W}|f(w)|^pd\mu(w)\Big)^{\frac{2}{p}}
\end{eqnarray*} 
one can rewrite (\ref{beckner}) as
\begin{eqnarray}\label{beckner 2}
\int_{W}|f(w)|^2d\mu(w)-\Big(\int_{W}|f(w)|^pd\mu(w)\Big)^{\frac{2}{p}}\leq (2-p)\int_{W}|Df(w)|^2d\mu(w).
\end{eqnarray}
Dividing both sides of (\ref{beckner 2}) by $2-p$ and letting $p\to 2^-$ one obtains the logarithmic Sobolev inequality (\cite{Gross}):
\begin{eqnarray*}
&&\int_{W}|f(w)|^2\ln\big(|f(w)|^2\big)d\mu(w)-\int_{W}|f(w)|^2d\mu(w)\cdot\ln\Big(\int_{W}|f(w)|^2d\mu(w)\Big)\nonumber\\
&&\leq 2\int_{W}|Df(w)|^2d\mu(w).
\end{eqnarray*}
Inequalities (\ref{beckner})-(\ref{beckner 2}), viewed as an interpolation between the Poincar\'e and logarithmic Sobolev inequalites, have attracted the attention of several authors; generalizations to log-concave measures, search for best constants and applications to partial differential equations have been the main topics of investigation. We refer the reader to the papers \cite{ABD}, \cite{AMTU}, \cite{LO}, \cite{Wang} and the references quoted there.\\

\noindent In the paper \cite{DLS 2} the authors introduced and studied a family of products for functions defined on Gaussian spaces:
\begin{eqnarray}\label{alpha intro}
f\circ_{\alpha} g:=e^{\tau N}\Big(e^{-\tau N}f\cdot e^{-\tau N}g\Big)
\end{eqnarray}
where $\sqrt{\alpha}:=e^{-\tau}$ and $f,g$ belong to some suitable function space (see Section 2.2 below for precise conditions). This family of products interpolates between the usual point-wise multiplication, when $\alpha=1$, and the Wick product, when $\alpha =0$ (this is obtained in \cite{DLS 2} through a limit argument). The crucial role of this family of products is in connection with the theory of stochastic integration and stochastic differential equations; in fact, one can prove that the following Wong-Zakai-type theorem holds:\\
\noindent  If for $k\geq1$, $\{W^k_t\}_{0\leq t\leq T}$ is a smooth approximation of the
white noise $W_t:=\frac{dB_t}{dt}$ ($B_t$ being a one dimensional Brownian motion) then the solution of
\begin{eqnarray*}
\frac{dX_t^k}{dt}=b(X_t^k)+X_t^k\circ_{\alpha} W_t^k,\quad
X_0^k=x
\end{eqnarray*}
converges in $\mathcal{L}^p(W,\mu)$, as $k$ goes to infinity, to the solution of
\begin{eqnarray*}
dX_t=b(X_t)dt+X_td^{\alpha}B_t,\quad X_0=x
\end{eqnarray*}
where
\begin{eqnarray*}
\int_0^TX_td^{\alpha}B_t:=\lim_{n\to+\infty}\sum_{k=1}^nX_{(1-\frac{\alpha}{2})t_{k-1}+\frac{\alpha}{2} t_{k}}\cdot(B_{t_{k}}-B_{t_{k-1}}).
\end{eqnarray*}
Observe that when $\alpha=0$ or $1$ we obtain the It\^o and Stratonovich integrals, respectively.\\

\noindent The aim of the present paper is to show that the Beckner's inequality (\ref{beckner}) can be generalized in a natural way to convolution measures on abstract Wiener spaces. This generalization passes through the use of the products $\circ_{\alpha}$ defined in (\ref{alpha intro}) and contains as a particular case the Poincar\'e-type inequality obtained in \cite{L}. More precisely, we will prove the following inequality:
\begin{eqnarray}\label{main result intro}
\int_{W}|f(w)|^2d\rho(w)-\int_{W} (f\circ_{\alpha}f)(w)d\rho(w)\leq (1-\alpha)\int_{W}\Vert Df(w)\Vert_H^2d\rho(w)
\end{eqnarray}
where $(W,H,\mu)$ is an abstract Wiener space, $\rho=\mu\star\nu$ and $\nu$ is a probability measure on $(W,\mathcal{B}(W))$. To see how (\ref{main result intro}) reduces to (\ref{beckner}) when $\rho=\mu$ observe that by definition $\alpha=p-1$ and that we can write
\begin{eqnarray*}
\int_{W} (f\circ_{\alpha}f)(w)d\mu(w)&=&\int_{W}e^{\tau N}\big(e^{-\tau N}f\cdot e^{-\tau N}f\big) (w)d\mu(w)\\
&=&\int_{W}e^{-\tau N}f(w)\cdot e^{-\tau N}f (w)d\mu(w)\\
&=&\int_{W}\big|e^{-\tau N}f(w)\big|^2 d\mu(w).
\end{eqnarray*}
Moreover, since $\circ_{\alpha}$ approaches the Wick product $\diamond$ as $\alpha\to 0^+$, inequality (\ref{main result intro}) becomes in that limit
\begin{eqnarray*}
\int_{W}|f(w)|^2d\rho(w)-\int_{W} (f\diamond f)(w)d\rho(w)\leq\int_{W}\Vert Df(w)\Vert_H^2d\rho(w).
\end{eqnarray*}
This last inequality, obtained in \cite{L}, is weaker than the classic Poincar\'e inequality for the measure $\rho$ since in general we have
\begin{eqnarray*}
\Big(\int_{W}f(w)d\rho(w)\Big)^2\leq\int_{W} (f\diamond f)(w)d\rho(w).
\end{eqnarray*}
Our approach is based on a novel idea whose crucial ingredient is the positive definiteness of a certain quadratic form involving Wick powers and integration with respect to convolution measures (see Proposition \ref{convolution measures} below). We mention that in the very recent papers \cite{Z}, \cite{WW} and \cite{CZ} Poincar\'e, weak Poincar\'e and logarithmic Sobolev inequalities for convolution measures on finite dimensional Euclidean spaces have been investigated: here the reference Gaussian (or log-concave) measure is convolved with compactly supported measures. We work in a dimension free framework and once we specify our assumptions for the finite dimensional case (see Corollary \ref{corollary} below) we get an exponential integrability condition on the measure $\nu$ (see (\ref{integrability condition}) below), which is clearly satisfied for compactly supported measures. However, as we mentioned above, inequality (\ref{main result intro}) is weaker, at least for $\alpha=0$, than the Poincar\'e inequality (and hence the logarithmic Sobolev inequality) studied in the above mentioned papers.\\
We first prove inequality (\ref{main result intro}) for $f$ being a linear combination of stochastic exponentials and then, under an additional condition on the integrating measure $\rho$, we extend the validity of the result by density to suitable Sobolev spaces, which clearly contain the class of smooth cylindrical functions (that usually represent the class for testing functional inequalities on infinite dimensional domains).\\
The paper is organized as follows: Section 2 collects definitions, notations and the necessary background material while in Section 3, after some preliminary results, we state and prove the main theorem of the paper followed by some refinements for the finite dimensional case and a point-wise covariance inequality (see (\ref{a}) below) satisfied by the densities of convolution measures with respect to the reference Gaussian measure. 

\section{Framework}

The aim of this section is to collect the necessary background
material and fix the notation. For the sake of clarity the topics
will not be treated in their greatest generality. For more details the interested reader is
referred to the books of Bogachev \cite{Bogachev}, Janson \cite{J}, Nualart \cite{Nualart} and
to the paper by Potthoff and Timpel \cite{PT} (the latter reference
is suggested, among other things, for the theory of the spaces
$\mathcal{G}_{\lambda}$ and the notion of Wick product).

\subsection{The spaces $\mathbb{D}^{k,p}$ and $\mathcal{G}_{\lambda}$}

Let $(H,W,\mu)$ be an \emph{abstract Wiener space}, that means
$(H,\langle\cdot,\cdot\rangle_H)$ is a separable Hilbert space which
is continuously and densely embedded in the Banach space
$(W,\Vert\cdot\Vert_W)$ and $\mu$ is a Gaussian probability measure
on the Borel sets $\mathcal{B}(W)$ of $W$ such that
\begin{eqnarray}\label{Gaussian characteristic}
\int_{W}e^{i\langle w,w^*\rangle}d\mu(w)=e^{-\frac{1}{2}\Vert
w^*\Vert_H^2},\quad\mbox{ for all }w^*\in W^*.
\end{eqnarray}
Here $W^*\subset H$ denotes the dual space of $W$ (which in turn is
dense in $H$) and $\langle\cdot,\cdot\rangle$ stands for the dual
pairing between $W$ and $W^*$. We will refer to $H$ as the
\emph{Cameron-Martin} space of $W$. Set for $p\geq 1$,
\begin{eqnarray*}
\mathcal{L}^p(W,\mu):=\Big\{f:W\to\mathbb{R}\mbox{ such that }\Vert
f\Vert_p:=\Big(\int_W|f(w)|^pd\mu(w)\Big)^{\frac{1}{p}}<+\infty\Big\}.
\end{eqnarray*}
It follows from
(\ref{Gaussian characteristic}) that the map
\begin{eqnarray*}
W^*&\to&\mathcal{L}^2(W,\mu)\\
w^*&\mapsto&\langle w,w^*\rangle
\end{eqnarray*}
is an isometry; we can therefore define for $\mu$-almost all $w\in
W$ the quantity $\langle w,h\rangle$ for $h\in H$  as an element of
$\mathcal{L}^2(W,\mu)$.\\
We now introduce the gradient operator and a class of functions
of Sobolev type. On the set
\begin{eqnarray*}
\mathcal{S}:=\{f(w)=\varphi(\langle w,h_1\rangle,...,\langle w,h_n\rangle)\mbox{ where
}n\in\mathbb{N}, h_1,...,h_n\in H\mbox{ and }\varphi\in
C_0^{\infty}(\mathbb{R}^n)\}
\end{eqnarray*}
define
\begin{eqnarray*}
D(\varphi(\langle w,h_1\rangle,...,\langle w,h_n\rangle)):=\sum_{j=1}^n\frac{\partial\varphi}{\partial
x_j}(\langle w,h_1\rangle,...,\langle w,h_n\rangle)h_j.
\end{eqnarray*}
The operator $D$ maps $\mathcal{S}$ into $\mathcal{L}^p(W,\mu;H)$;
moreover by means of the integration by parts formula
\begin{eqnarray*}
\int_W\langle Df(w),h\rangle_Hd\mu(w)=\int_Wf(w)\cdot\langle w,h\rangle d\mu(w),\quad f\in\mathcal{S}, h\in H
\end{eqnarray*}
one can prove that $D$ is closable in $\mathcal{L}^p(W,\mu)$; we
therefore define the space $\mathbb{D}^{1,p}$ to be closure of
$\mathcal{S}$ under the norm
\begin{eqnarray*}
\Vert f\Vert_{1,p}:=\Big(\int_W|f(w)|^pd\mu(w)+\int_W\Vert
Df(w)\Vert_H^pd\mu(w)\Big)^{\frac{1}{p}}.
\end{eqnarray*}
In a similar way, iterating the definition of $D$ and introducing
for any $k\in\mathbb{N}$ the norms
\begin{eqnarray*}
\Vert f\Vert_{k,p}:=\Big(\int_W|f(w)|^pd\mu(w)+\sum_{j=1}^k\int_W\Vert
D^jf(w)\Vert_{H^{\otimes j}}^pd\mu(w)\Big)^{\frac{1}{p}}.
\end{eqnarray*}
one constructs the spaces $\mathbb{D}^{k,p}$. \\
In order to prove our main results we need to introduce an
additional class of functions. To this aim recall that by the Wiener-It\^o chaos
decomposition theorem any element $f$ in $\mathcal{L}^2(W,\mu)$ has
an infinite orthogonal expansion
\begin{eqnarray*}
f=\sum_{n\geq 0} \delta^n(f_n),
\end{eqnarray*}
where $f_n\in H^{\hat{\otimes}n}$, the space of symmetric elements
of $H^{\otimes n}$, and $\delta^n(f_n)$ stands for the multiple
Wiener-It\^o integral of $f_n$. We remark that $\delta^1(f_1)$
coincides with the element $\langle w,f_1\rangle$ mentioned above. Moreover,
one has
\begin{eqnarray*}
\Vert f\Vert_2^2=\sum_{n\geq 0}n!\Vert f_n\Vert^2_{H^{\otimes n}}.
\end{eqnarray*}
It is useful to observe that if $f$ happens to be in
$\mathbb{D}^{1,2}$ then
\begin{eqnarray*}
\int_W\Vert Df(w)\Vert_H^2d\mu(w)=\sum_{n\geq 1}n n!\Vert f_n\Vert^2_{H^{\otimes
n}}.
\end{eqnarray*}
For any $\lambda\geq 0$ define the operator $\Gamma(\lambda)$ acting
on $\mathcal{L}^2(W,\mu)$ as
\begin{eqnarray*}
\Gamma(\lambda)\Big(\sum_{n\geq 0}\delta^n(f_n)\Big):=\sum_{n\geq 0}
\lambda^n\delta^n(f_n).
\end{eqnarray*}
Observe that with $\lambda=e^{-\tau}$, $\tau\geq 0$ then the operator $\Gamma(\lambda)$ coincides with the
Ornstein-Uhlenbeck semigroup
\begin{eqnarray*}
(P_{\tau}f)(w):=\int_Wf\big(e^{-\tau}w+\sqrt{1-e^{-2\tau}}\tilde{w}\big)d\mu(\tilde{w}),\quad
w\in W, \tau\geq 0
\end{eqnarray*}
which is a bounded  operator. Otherwise,
$\Gamma(\lambda)$ is an unbounded operator with domain in $\mathcal{L}^2(W,\mu)$ given by
\begin{eqnarray*}
\mathcal{G}_{\lambda}:=\Big\{f=\sum_{n\geq 0}
\delta^n(f_n)\in\mathcal{L}^2(W,\mu)\mbox{ such that }\Vert
f\Vert_{\mathcal{G}_{\lambda}}^2:=\sum_{n\geq 0}n!\lambda^{2n}\Vert
f_n\Vert^2_{H^{\otimes n}}<+\infty\Big\}.
\end{eqnarray*}
The family $\{\mathcal{G}_{\lambda}\}_{\lambda\geq 1}$ is a
collection of Hilbert spaces with the property that
\begin{eqnarray*}
\mathcal{G}_{\lambda_2}\subset\mathcal{G}_{\lambda_1}\subset\mathcal{L}^2(W,\mu)
\end{eqnarray*}
for $1<\lambda_1<\lambda_2$. Define
$\mathcal{G}:=\bigcap_{\lambda\geq 1}\mathcal{G}_{\lambda}$ endowed
with the projective limit topology; the space $\mathcal{G}$ turns
out to be a reflexive Fr\'echet space. Its dual $\mathcal{G}^*$ is a
space of generalized functions that can be represented as
$\mathcal{G}^*=\bigcup_{\lambda> 0}\mathcal{G}_{\lambda}$. We remark
that for $f\in\mathcal{L}^2(W,\mu)$ and $g\in\mathcal{G}$ one has
\begin{eqnarray*}
\langle\langle f,g\rangle\rangle=\int_Wf(w)g(w)d\mu(w)
\end{eqnarray*}
where $\langle\langle\cdot,\cdot\rangle\rangle$ stands for the dual
pairing between $\mathcal{G}^*$ and $\mathcal{G}$.\\
One of the most representative elements of $\mathcal{G}$ is the so
called \emph{stochastic exponential}
\begin{eqnarray*}
w\in W\mapsto\mathcal{E}(h)(w):=\exp\Big\{\langle w,h\rangle-\frac{\Vert
h\Vert_H^2}{2}\Big\},\quad h\in H.
\end{eqnarray*}
We recall that stochastic exponentials correspond among other
things to Radon-Nikodym derivatives, with respect to the underlying
Gaussian measure $\mu$, of probability measures on
$(W,\mathcal{B}(W))$ obtained through shifted copies of $\mu$ along
Cameron-Martin directions. Its membership to $\mathcal{G}$ can be
easily verified since the Wiener-It\^o chaos decomposition of
$\mathcal{E}(h)$ is obtained with $f_n=\frac{h^{\otimes n}}{n!}$.
Moreover the linear span of the stochastic exponentials, that we
denote with $\mathcal{E}$, is dense in $\mathcal{L}^p(W,\mu)$,
$\mathbb{D}^{k,p}$, for any $p\geq 1$ and $k\in\mathbb{N}$, and $\mathcal{G}$.

\subsection{The Wick and $\alpha$-products}

For $h,k\in H$ define
\begin{eqnarray*}
\mathcal{E}(h)\diamond\mathcal{E}(k):=\mathcal{E}(h+k).
\end{eqnarray*}
This is called the \emph{Wick product} of $\mathcal{E}(h)$ and
$\mathcal{E}(k)$. Extend this operation by linearity to
$\mathcal{E}$ to get a
commutative, associative and distributive (with respect to the sum)
multiplication. The Wick product is easily seen to
be an unbounded bilinear operator on the $\mathcal{L}^p(W,\mu)$ spaces; for instance, the Wick product
$f\diamond g$ of the two square integrable elements $f$ and $g$
lives in the distributional space $\mathcal{G}^*$.\\
Now, let $f,g\in\mathcal{L}^p(W,\mu)$ for some $p> 1$. For $\alpha\in ]0,1]$ define
\begin{eqnarray}\label{alpha}
(f\circ_{\alpha}g)(w):=\Gamma(1/\sqrt{\alpha})(\Gamma(\sqrt{\alpha})f\cdot\Gamma(\sqrt{\alpha})g)(w),\quad w\in W.
\end{eqnarray}
This is called the \emph{$\alpha$-product} of $f$ and $g$; it was introduced for the first time in \cite{DLS 2} in connection with stochastic integrals and stochastic differential equations. This family of products provides an interpolation between the usual point-wise multiplication (obtained trivially with $\alpha$=1) and the Wick product (obtained in the limit as $\alpha\to 0^+$). A simple calculation shows that
\begin{eqnarray}\label{exponential alpha}
\mathcal{E}(h_1)\circ_{\alpha}\mathcal{E}(h_2)=\mathcal{E}(h_1+h_2)e^{\alpha\langle h_1,h_2\rangle_H}.
\end{eqnarray}
The reader is referred to Theorem \ref{aurel} below for a sharp H\"older inequality for the family of products $\circ_{\alpha}$. 

\section{Main results}

In the sequel we will call \emph{convolution measure} on $(W,\mathcal{B}(W))$ any probability measure of the form $\mu\star\nu$ where $\mu$ is the reference Gaussian measure on $(W,\mathcal{B}(W))$, $\nu$ is a probability measure on $(W,\mathcal{B}(W))$ and 
\begin{eqnarray*}
(\mu\star\nu)(A):=\int_W\mu(A-w)d\nu(w),\quad A\in\mathcal{B}(W).
\end{eqnarray*}
We begin this section with a simple but crucial result: the description of the interplay between the Wick product and convolution measures.
\begin{proposition}\label{convolution measures}
Let $\nu$ be a probability measure on $(W,\mathcal{B}(W))$ and define $\rho:=\mu\ast\nu$. Then for every $z_1,...,z_n\in\mathbb{C}$ and $h_1,...,h_n\in H$ one has
\begin{eqnarray*}
\int_W\Big(\sum_{j=1}^nz_j\exp\Big\{i\langle w,h_j\rangle+\frac{\Vert h_j\Vert_H^2}{2}\Big\}\Big)\diamond\overline{\Big(\sum_{j=1}^nz_j\exp\Big\{i\langle w,h_j\rangle+\frac{\Vert h_j\Vert_H^2}{2}\Big\}\Big)}d\rho(w)\geq 0
\end{eqnarray*}  
where $i$ is the imaginary unit and $\overline{u}$ stands for the complex conjugate of $u$. 
\end{proposition}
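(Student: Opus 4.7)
The strategy is to recognize the integrand as a bilinear combination of (complex) stochastic exponentials, apply the Wick-product rule to contract it, and then exploit the factorization of the characteristic function of a convolution to reduce everything to a manifestly positive-definite quadratic form.

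First I would observe that the function $w\mapsto\exp\{i\langle w,h\rangle+\|h\|_H^2/2\}$ is, formally, $\mathcal{E}(ih)$: indeed, its Wiener-Itô chaos expansion is obtained from the expansion $\mathcal{E}(k)=\sum_{n\ge 0}\delta^n(k^{\otimes n}/n!)$ by analytic continuation in $k$, specialized to $k=ih$ (noting that the bilinear extension gives $\|ih\|_H^2=-\|h\|_H^2$). Consequently, extending the identity $\mathcal{E}(h)\diamond\mathcal{E}(k)=\mathcal{E}(h+k)$ by the same analytic continuation (equivalently, extending it bilinearly to complex linear combinations in $\mathcal{E}$, which is licit since both sides are well-defined in $\mathcal{G}^*$), one obtains
\begin{equation*}
\exp\Big\{i\langle w,h_j\rangle+\tfrac{\|h_j\|_H^2}{2}\Big\}\,\diamond\,\overline{\exp\Big\{i\langle w,h_k\rangle+\tfrac{\|h_k\|_H^2}{2}\Big\}}=\exp\Big\{i\langle w,h_j-h_k\rangle+\tfrac{\|h_j-h_k\|_H^2}{2}\Big\}.
\end{equation*}
Expanding the squared Wick modulus in the proposition therefore yields a double sum of the above terms with coefficients $z_j\bar z_k$.

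Next I would integrate term by term against $\rho=\mu\star\nu$. The key computation is
\begin{equation*}
\int_W e^{i\langle w,h_j-h_k\rangle}\,d\rho(w)=\widehat{\mu}(h_j-h_k)\,\widehat{\nu}(h_j-h_k)=e^{-\|h_j-h_k\|_H^2/2}\,\widehat{\nu}(h_j-h_k),
\end{equation*}
using both the convolution-product rule for Fourier transforms and the Gaussian identity \eqref{Gaussian characteristic} (the latter extended from $W^*$ to $H$ by density, exactly as in the construction of $\langle w,h\rangle$). Multiplying by the factor $e^{\|h_j-h_k\|_H^2/2}$ from the Wick-contraction above produces a perfect cancellation, leaving
\begin{equation*}
\int_W\Big(\sum_j z_j e^{i\langle\cdot,h_j\rangle+\|h_j\|_H^2/2}\Big)\diamond\overline{\Big(\sum_j z_j e^{i\langle\cdot,h_j\rangle+\|h_j\|_H^2/2}\Big)}\,d\rho=\sum_{j,k=1}^n z_j\bar z_k\,\widehat{\nu}(h_j-h_k).
\end{equation*}

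Finally, the right-hand side is non-negative by Bochner's theorem; in fact, it admits the explicit representation
\begin{equation*}
\sum_{j,k=1}^n z_j\bar z_k\,\widehat{\nu}(h_j-h_k)=\int_W\Big|\sum_{j=1}^n z_j e^{i\langle w,h_j\rangle}\Big|^2\,d\nu(w)\ge 0,
\end{equation*}
which concludes the argument. The only delicate point is the justification of the Wick-contraction identity for \emph{complex} parameters; I would handle this either by the analytic continuation argument sketched above, or, more concretely, by reducing to a finite-dimensional cylindrical setting (where the formula reduces to an elementary Gaussian integral identity) and using density of cylindrical functions in $\mathcal{G}$. Everything else is a direct computation, and no integrability issues arise because only finite linear combinations of bounded exponentials are involved.
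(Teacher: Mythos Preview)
Your proof is correct and follows essentially the same approach as the paper: expand the Wick product bilinearly, use the contraction $\mathcal{E}(ih_j)\diamond\mathcal{E}(-ih_k)=\mathcal{E}(i(h_j-h_k))$, exploit the factorization $\widehat{\rho}=\widehat{\mu}\cdot\widehat{\nu}$ so that the Gaussian factor cancels, and recognize the remainder as $\int_W\big|\sum_j z_j e^{i\langle w,h_j\rangle}\big|^2\,d\nu(w)\ge 0$. The paper carries this out as a direct chain of equalities without the interpretive layer (the $\mathcal{E}(ih)$ viewpoint, Bochner's theorem) or the explicit discussion of analytic continuation, but the computation is the same line for line.
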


\begin{proof}
We simply need to utilize the definition of Wick product and the Fourier transform characterization (\ref{Gaussian characteristic}) of the underlying Gaussian measure $\mu$:
\begin{eqnarray*}
&&\int_W\Big(\sum_{j=1}^nz_j\exp\Big\{i\langle w,h_j\rangle+\frac{\Vert h_j\Vert_H^2}{2}\Big\}\Big)\diamond\overline{\Big(\sum_{j=1}^nz_j\exp\Big\{i\langle w,h_j\rangle+\frac{\Vert h_j\Vert_H^2}{2}\Big\}\Big)}d\rho(w)\\
&=&\int_W\sum_{j,k=1}^nz_j\bar{z}_k\exp\Big\{i\langle w,h_j\rangle+\frac{\Vert h_j\Vert_H^2}{2}\Big\}\diamond\exp\Big\{-i\langle w,h_k\rangle+\frac{\Vert h_k\Vert_H^2}{2}\Big\}d\rho(w)\\
&=&\int_W\sum_{j,k=1}^nz_j\bar{z}_k\exp\Big\{i\langle w,h_j-h_k\rangle+\frac{\Vert h_j-h_k\Vert_H^2}{2}\Big\}d\rho(w)\\
&=&\sum_{j,k=1}^nz_j\bar{z}_k\exp\Big\{\frac{\Vert h_j-h_k\Vert_H^2}{2}\Big\}\int_W\exp\Big\{i\langle w,h_j-h_k\rangle\Big\}d\rho(w)\\
&=&\sum_{j,k=1}^nz_j\bar{z}_k\exp\Big\{\frac{\Vert h_j-h_k\Vert_H^2}{2}\Big\}\int_W\exp\Big\{i\langle w,h_j-h_k\rangle\Big\}d\mu(w)\int_W\exp\Big\{i\langle w,h_j-h_k\rangle\Big\}d\nu(w)\\
&=&\sum_{j,k=1}^nz_j\bar{z}_k\int_W\exp\Big\{i\langle w,h_j-h_k\rangle\Big\}d\nu(w)\\
&=&\int_W\Big(\sum_{j=1}^nz_j\exp\Big\{i\langle w,h_j\rangle\Big\}\Big)\cdot\overline{\Big(\sum_{j=1}^nz_j\exp\Big\{i\langle w,h_j\rangle\Big\}\Big)}d\nu(w)\\
&=&\int_W\Big|\sum_{j=1}^nz_j\exp\Big\{i\langle w,h_j\rangle\Big\}\Big|^2d\nu(w)\\
&\geq&0.
\end{eqnarray*}
\end{proof}

\begin{remark}\label{strong positivity}
Assume the measure $\rho$ from the previous proposition to be absolutely continuous with respect to $\mu$ with a density $\xi$ belonging to $\mathcal{L}^p(W,\mu)$ for some $p>1$. In this case we can write
\begin{eqnarray*}\label{positive definite}
&&\int_W\Big(\sum_{j=1}^nz_j\exp\Big\{i\langle w,h_j\rangle+\frac{\Vert h_j\Vert_H^2}{2}\Big\}\Big)\diamond\overline{\Big(\sum_{j=1}^nz_j\exp\Big\{i\langle w,h_j\rangle+\frac{\Vert h_j\Vert_H^2}{2}\Big\}\Big)}d\rho(w)\nonumber\\
&=&\int_W\Big(\sum_{j=1}^nz_j\exp\Big\{i\langle w,h_j\rangle+\frac{\Vert h_j\Vert_H^2}{2}\Big\}\Big)\diamond\overline{\Big(\sum_{j=1}^nz_j\exp\Big\{i\langle w,h_j\rangle+\frac{\Vert h_j\Vert_H^2}{2}\Big\}\Big)}\cdot\xi(w)d\mu(w)\nonumber\\
&=&\sum_{j,k=1}^nz_j\bar{z}_k\int_W\exp\Big\{i\langle w,h_j-h_k\rangle+\frac{\Vert h_j-h_k\Vert_H^2}{2}\Big\}\cdot\xi(w)d\mu(w)\nonumber\\
&=&\sum_{j,k=1}^nz_j\bar{z}_k\tau_{\xi}(h_j-h_k)
\end{eqnarray*}
where 
\begin{eqnarray*}
h\in H\mapsto \tau_{\xi}(h):=\int_W\exp\Big\{i\langle w,h\rangle+\frac{\Vert h\Vert_H^2}{2}\Big\}\cdot\xi(w)d\mu(w).
\end{eqnarray*}
With this notation the statement of Proposition \ref{convolution measures} reads
\begin{eqnarray*}
\sum_{j,k=1}^nz_j\bar{z}_k\tau_{\xi}(h_j-h_k)\geq 0
\end{eqnarray*}
which means that the function $\tau_{\xi}$ is positive definite; the latter is in turn equivalent, according to Proposition 5.1 in \cite{NZ}, to the property
\begin{eqnarray}\label{def. strong pos.}
\langle\langle \Gamma(1/\sqrt{\alpha})\xi,\varphi\rangle\rangle\geq 0\mbox{ for each non negative }\varphi\in\mathcal{G}\mbox{ and }\alpha>0.
\end{eqnarray} 
Here $\langle\langle\cdot,\cdot\rangle\rangle$ denotes the dual pairing between the distributional space $\mathcal{G}^*$ and the test function space $\mathcal{G}$. We mention that elements satisfying condition (\ref{def. strong pos.}) are referred in \cite{NZ} as \emph{strongly positive}.
\end{remark}

Another connection between convolution measures and Wick product is the following.

\begin{proposition}
Let $\rho_1:=\mu\star\nu_1$ and $\rho_2:=\mu\star\nu_2$ be convolution measures on $(W,\mathcal{B}(W))$ and assume the existence of $\xi_1,\xi_2\in\mathcal{L}^1(W,\mu)$ such that $d\rho_1=\xi_1d\mu$ and $d\rho_2=\xi_2d\mu$. Then for $\rho_3:=\mu\star\nu_1\star\nu_2$ one has $d\rho_3=\xi_1\diamond\xi_2 d\mu$.
\end{proposition}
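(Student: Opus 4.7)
The strategy is to identify $\xi_3:=d\rho_3/d\mu$ with $\xi_1\diamond\xi_2$ by comparing their $S$-transforms $S[\varphi](h):=\int_W\mathcal{E}(h)(w)\,\varphi(w)\,d\mu(w)$, $h\in H$. Two facts make this natural. First, the linear span $\mathcal{E}$ of the stochastic exponentials is dense in $\mathcal{G}$, so the $S$-transform is injective on $\mathcal{G}^*$ and, a fortiori, on $\mathcal{L}^1(W,\mu)$. Second, the defining identity $\mathcal{E}(h_1)\diamond\mathcal{E}(h_2)=\mathcal{E}(h_1+h_2)$ extends by linearity and density to the fundamental multiplicativity $S[\xi_1\diamond\xi_2]=S[\xi_1]\cdot S[\xi_2]$.

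Concretely, I would proceed as follows. First, verify that $\rho_3\ll\mu$: writing $\rho_3=\rho_1\star\nu_2$ and applying the Cameron--Martin formula to the inner integral under Fubini produces the explicit representation
$$\xi_3(w)=\int_W\xi_1(w-w')\,\mathcal{E}(w')(w)\,d\nu_2(w'),$$
which is well-defined because the hypotheses $\rho_j\ll\mu$ implicitly force $\nu_j$ to be concentrated on the Cameron--Martin subspace $H$. Next, compute $S[\xi_3](h)$ directly from the definition of $\rho_3$: using the Cameron--Martin identity $\int_W\mathcal{E}(h)(w+w')\,d\mu(w)=\exp\langle w',h\rangle$ for $w'\in H$ together with the multiplicativity of the Laplace transform under convolution, one obtains
$$S[\xi_3](h)=\int_W\mathcal{E}(h)\,d\rho_3=\int_We^{\langle w',h\rangle}\,d(\nu_1\star\nu_2)(w')=\hat{\nu}_1(h)\,\hat{\nu}_2(h).$$
The same Cameron--Martin computation, applied separately to $\rho_1$ and $\rho_2$, yields $S[\xi_j](h)=\hat{\nu}_j(h)$; combined with the multiplicativity above this gives $S[\xi_1\diamond\xi_2](h)=\hat{\nu}_1(h)\hat{\nu}_2(h)=S[\xi_3](h)$, and injectivity of $S$ concludes the identification.

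The main technical obstacle is the a priori status of $\xi_1\diamond\xi_2$: for $\xi_1,\xi_2$ only in $\mathcal{L}^1(W,\mu)$, the Wick product is in general merely a Hida-type distribution in $\mathcal{G}^*$, with no $\mathcal{L}^p$-estimate available at the outset. The proposed approach sidesteps this difficulty by working exclusively at the level of pairings against the dense family $\mathcal{E}\subset\mathcal{G}$; the conclusion that $\xi_1\diamond\xi_2$ is in fact a genuine $\mathcal{L}^1$-function then emerges a posteriori from its identification with $\xi_3=d\rho_3/d\mu$.
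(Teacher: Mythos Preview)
Your strategy is essentially the same as the paper's: identify $\xi_3$ with $\xi_1\diamond\xi_2$ by testing against an exponential family and exploiting that convolution turns into multiplication on the transform side, together with the multiplicative behaviour of the Wick product under that same transform. The paper, however, tests against the \emph{bounded} complex exponentials $w\mapsto e^{i\langle w,h\rangle}$ (i.e.\ it compares characteristic functions), whereas you test against the \emph{unbounded} real stochastic exponentials $\mathcal{E}(h)$ via the $S$-transform.

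That difference is not cosmetic; it is where your argument acquires a genuine gap. Under the sole hypothesis $\xi_j\in\mathcal{L}^1(W,\mu)$ there is no reason for
\[
S[\xi_j](h)=\int_W\mathcal{E}(h)(w)\,\xi_j(w)\,d\mu(w)=\int_W e^{\langle w',h\rangle}\,d\nu_j(w')
\]
to be finite for general $h\in H$: nothing forces $\nu_j$ to have exponential moments. Correspondingly, $\mathcal{L}^1(W,\mu)$ is \emph{not} contained in $\mathcal{G}^*=\bigcup_{\lambda>0}\mathcal{G}_\lambda$ (the latter is built from the $\mathcal{L}^2$ chaos expansion), so your appeal to ``injectivity of $S$ on $\mathcal{G}^*$, a fortiori on $\mathcal{L}^1$'' is not justified, and the pairings $\langle\langle\xi_j,\mathcal{E}(h)\rangle\rangle$ you rely on need not exist. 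The paper's choice of $e^{i\langle w,h\rangle}$ avoids all of this: every integral is automatically finite, the characteristic function determines a probability measure, and the Wick-product identity it invokes is the one for complex stochastic exponentials (equivalently the $T$-transform), which pairs against bounded test functions.

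A secondary point: your assertion that $\rho_j\ll\mu$ ``implicitly forces $\nu_j$ to be concentrated on $H$'' is true but is itself a nontrivial theorem about Gaussian shifts, and you use it to make sense of $\mathcal{E}(w')$ and $\langle w',h\rangle$ for $\nu_j$-typical $w'$. The paper never needs this step; working with characteristic functions, one may take $h\in W^*$ so that $\langle w,h\rangle$ is defined for every $w\in W$, and the factorisation of the Fourier transform under convolution holds for arbitrary probability measures on $W$.
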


\begin{proof}
Let $h\in H$; then
\begin{eqnarray*}
\int_W\exp\{i\langle w,h\rangle\}d\rho_3(w)&=&\int_W\exp\{i\langle w,h\rangle\}d(\mu\star\nu_1\star\nu_2)(w)\\
&=&\int_W\exp\{i\langle w,h\rangle\}d\mu(w)\cdot\int_W\exp\{i\langle w,h\rangle\}d\nu_1(w)\\
&&\times\int_W\exp\{i\langle w,h\rangle\}d\nu_2(w)\\
&=&\exp\Big\{-\frac{\Vert h\Vert_H^2}{2}\Big\}\cdot\int_W\exp\{i\langle w,h\rangle\}d\nu_1(w)\\
&&\times\int_W\exp\{i\langle w,h\rangle\}d\nu_2(w)\\
&=&\exp\Big\{\frac{\Vert h\Vert_H^2}{2}\Big\}\cdot\int_W\exp\{i\langle w,h\rangle\}d\rho_1(w)\\
&&\times\int_W\exp\{i\langle w,h\rangle\}d\rho_2(w)\\
&=&\exp\Big\{\frac{\Vert h\Vert_H^2}{2}\Big\}\cdot\int_W\exp\{i\langle w,h\rangle\}\xi_1(w)d\mu(w)\\
&&\times\int_W\exp\{i\langle w,h\rangle\}\xi_2(w)d\mu(w)\\
&=&\exp\Big\{-\frac{\Vert h\Vert_H^2}{2}\Big\}\cdot\int_W\exp\Big\{i\langle w,h\rangle+\frac{\Vert h\Vert_H^2}{2}\Big\}\xi_1(w)d\mu(w)\\
&&\times\int_W\exp\Big\{i\langle w,h\rangle+\frac{\Vert h\Vert_H^2}{2}\Big\}\xi_2(w)d\mu(w)\\
&=&\exp\Big\{-\frac{\Vert h\Vert_H^2}{2}\Big\}\cdot\int_W\exp\Big\{i\langle w,h\rangle+\frac{\Vert h\Vert_H^2}{2}\Big\}(\xi_1\diamond\xi_2)(w)d\mu(w)\\
&=&\int_W\exp\{i\langle w,h\rangle\}(\xi_1\diamond\xi_2)(w)d\mu(w)
\end{eqnarray*}
where we utilized the characterizing property of the Wick product
\begin{eqnarray*}
\int_W(f\diamond g)(w)\mathcal{E}(h)(w)d\mu(w)=\int_Wf(w)\mathcal{E}(h)(w)d\mu(w)\cdot\int_Wg(w)\mathcal{E}(h)(w)d\mu(w)
\end{eqnarray*}
which holds for any $h\in H$.
\end{proof}

The next theorem is a particular case of a more general result proved in \cite{Stan} where the reader is referred for the proof (the link between the theorem presented below and the results in the reference mentioned before is: $\Gamma(\lambda)(f\circ_{\alpha}g)=\Gamma(\lambda)f\circ_{\frac{\alpha}{\lambda^2}}\Gamma(\lambda)g$). It provides a H\"older inequality for the family of $\alpha$-products $\circ_{\alpha}$ which we will utilize to find the right function spaces for our extension of the Beckner's type Poincar\'e inequality.
\begin{theorem}\label{aurel}
Let $p,q,r>1$ and $\alpha\in [0,1]$ be such that 
\begin{eqnarray}\label{condition on parameters}
\frac{1}{r-\frac{1-\alpha}{1+\alpha}}=\frac{1+\alpha}{2(p-1)+2\alpha}+\frac{1+\alpha}{2(q-1)+2\alpha}.
\end{eqnarray}
Then for any $f\in\mathcal{L}^p(W,\mu)$ and $g\in\mathcal{L}^q(W,\mu)$ one has $\Gamma\big(\sqrt{(1+\alpha)/2}\big)(f\circ_{\alpha}g)\in\mathcal{L}^r(W,\mu)$. More precisely,
\begin{eqnarray}\label{holder}
\Big\Vert\Gamma\big(\sqrt{(1+\alpha)/2}\big)(f\circ_{\alpha}g)\Big\Vert_r\leq\Vert f\Vert_p\cdot\Vert g\Vert_q.
\end{eqnarray}
\end{theorem}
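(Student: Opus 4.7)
The plan is to derive the estimate by composing two applications of Nelson's hypercontractive inequality with one application of the ordinary H\"older inequality. The starting point is the algebraic rewriting
\begin{eqnarray*}
\Gamma\bigl(\sqrt{(1+\alpha)/2}\bigr)(f\circ_\alpha g)=\Gamma\bigl(\sqrt{(1+\alpha)/(2\alpha)}\bigr)\bigl(\Gamma(\sqrt{\alpha})f\cdot\Gamma(\sqrt{\alpha})g\bigr),
\end{eqnarray*}
which follows directly from definition (\ref{alpha}) of the $\alpha$-product together with the semigroup identity $\Gamma(a)\Gamma(b)=\Gamma(ab)$ (equivalently, from the identity $\Gamma(\lambda)(f\circ_\alpha g)=\Gamma(\lambda)f\circ_{\alpha/\lambda^2}\Gamma(\lambda)g$ mentioned just before the statement, taken with $\lambda=\sqrt{(1+\alpha)/2}$).

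First I would push each factor into a higher $\mathcal{L}^p$-space using Nelson's hypercontractivity: setting $p':=1+(p-1)/\alpha$ and $q':=1+(q-1)/\alpha$, the identities $(\sqrt{\alpha})^2=(p-1)/(p'-1)=(q-1)/(q'-1)$ yield
\begin{eqnarray*}
\Vert\Gamma(\sqrt{\alpha})f\Vert_{p'}\leq\Vert f\Vert_p,\qquad \Vert\Gamma(\sqrt{\alpha})g\Vert_{q'}\leq\Vert g\Vert_q.
\end{eqnarray*}
Next I would apply the classical H\"older inequality to the pointwise product, obtaining
\begin{eqnarray*}
\Vert\Gamma(\sqrt{\alpha})f\cdot\Gamma(\sqrt{\alpha})g\Vert_s\leq\Vert\Gamma(\sqrt{\alpha})f\Vert_{p'}\Vert\Gamma(\sqrt{\alpha})g\Vert_{q'}
\end{eqnarray*}
with $1/s:=1/p'+1/q'=\alpha/(p-1+\alpha)+\alpha/(q-1+\alpha)$. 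Finally I would invoke Nelson one more time on the outer operator $\Gamma(\sqrt{(1+\alpha)/(2\alpha)})$, which contracts $\mathcal{L}^s$ into $\mathcal{L}^r$ precisely when $(r-1)(1+\alpha)/(2\alpha)\leq s-1$. A short algebraic manipulation shows that hypothesis (\ref{condition on parameters}) is the equality case of this last condition: multiplying it by $(1+\alpha)/2$ and using that $(1+\alpha)/[(1+\alpha)r-(1-\alpha)]=1/[r-(1-\alpha)/(1+\alpha)]$, one recovers $1/s=2\alpha/[(1+\alpha)r-(1-\alpha)]$ as required. Chaining the three inequalities then produces the bound in (\ref{holder}).

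The main obstacle is really only bookkeeping: making sure the indices transform consistently under the successive applications of Nelson and H\"older so that the exponent profile matches the statement. A genuine subtlety arises at the boundary $\alpha=0$, where (\ref{alpha}) is only defined through the limiting procedure of \cite{DLS 2} and $\Gamma(\sqrt{\alpha})$ formally degenerates; this case must be recovered either by approximation $\alpha\to 0^+$ (combining lower semicontinuity of the $\mathcal{L}^r$-norm with the convergence $f\circ_\alpha g\to f\diamond g$ in $\mathcal{G}^*$) or through a direct chaos-level argument of Janson type, as done in the more general framework of \cite{Stan}.
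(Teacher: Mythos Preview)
The paper does not prove this theorem at all; it explicitly refers the reader to \cite{Stan} for the proof. So there is no ``paper's own proof'' to compare against, but your proposal contains a genuine gap worth flagging.

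Your first two steps are fine: Nelson legitimately gives $\|\Gamma(\sqrt{\alpha})f\|_{p'}\le\|f\|_p$ with $p'=1+(p-1)/\alpha$, and H\"older then controls the pointwise product in $\mathcal{L}^s$. The failure is the third step. For every $\alpha<1$ the outer factor $\Gamma\bigl(\sqrt{(1+\alpha)/(2\alpha)}\bigr)$ has parameter strictly greater than $1$, and Nelson's hypercontractivity does \emph{not} extend to that regime: it is a statement about the contraction semigroup $\Gamma(\lambda)$, $\lambda\le 1$. For $\lambda>1$ the operator $\Gamma(\lambda)$ is unbounded, and there is no inequality $\|\Gamma(\lambda)h\|_r\le\|h\|_s$ valid for generic $h\in\mathcal{L}^s(W,\mu)$. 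A clean obstruction: take $r=2$, so your condition becomes $\lambda^2=s-1$, and the claimed bound would force the continuous inclusion $\mathcal{L}^s(W,\mu)\subset\mathcal{G}_{\sqrt{s-1}}$. This is false already in one dimension --- the indicator $1_{[0,\infty)}$ lies in every $\mathcal{L}^s$, yet its Hermite coefficients decay only polynomially, so it belongs to no $\mathcal{G}_\lambda$ with $\lambda>1$.

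The non-triviality of the theorem is exactly that the outer $\Gamma$ is an expansion; one cannot decouple the estimate into ``smooth the factors, multiply, then expand back'' treating the middle product as a black box. The special structure of $\Gamma(\sqrt{\alpha})f\cdot\Gamma(\sqrt{\alpha})g$ has to be exploited, which is what the arguments in \cite{Stan} (and, for the endpoint $\alpha=0$, in \cite{DLS}) do. Your bookkeeping of exponents is internally consistent and would close the loop if the missing bound existed, but it does not.
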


\begin{remark}
Observe that when $\alpha=1$ then $\circ_{\alpha}$ coincides with the usual point-wise product and (\ref{condition on parameters})-(\ref{holder}) become the classic H\"older inequality. On the other hand, when $\alpha=0$ then $\circ_{\alpha}$ coincides with the Wick product and  (\ref{condition on parameters})-(\ref{holder}) reduce to the H\"older-Young-Lieb inequality proved in \cite{DLS}. 
\end{remark}

We now make the first step towards the main result of the present paper. We are going to show that the left hand side of our main inequality (see (\ref{main inequality}) below) is non negative.

\begin{proposition}\label{left inequality theorem}
Let $\nu$ be a probability measure on $(W,\mathcal{B}(W))$ and choose $\alpha\in ]0,1]$. Assume that $\rho:=\mu\ast\nu$ is absolutely continuous with respect to $\mu$ with density $\xi$ belonging to $\mathcal{G}_{\sqrt{2/(1+\alpha)}}$. Then for any $f\in\mathcal{L}^{3+\alpha}(W,\mu)$ one gets
\begin{eqnarray}\label{left inequality}
\int_W|f(w)|^2d\rho(w)-\int_W (f\circ_{\alpha}f)(w)d\rho(w)\geq 0.
\end{eqnarray}
\end{proposition}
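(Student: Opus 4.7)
The plan is to prove (\ref{left inequality}) first on the dense class $\mathcal{E}$ of real linear combinations of stochastic exponentials, and then extend by density to $\mathcal{L}^{3+\alpha}(W,\mu)$ using Theorem~\ref{aurel} together with Nelson's hypercontractive estimate.

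For $f=\sum_{j=1}^{n} c_{j}\mathcal{E}(h_{j})$ with $c_{j}\in\mathbb{R}$ and $h_{j}\in H$, combining the pointwise identity $\mathcal{E}(h_{j})\mathcal{E}(h_{k})=\mathcal{E}(h_{j}+h_{k})\,e^{\langle h_{j},h_{k}\rangle_{H}}$ with (\ref{exponential alpha}) yields
\[
f^{2}-f\circ_{\alpha}f=\sum_{j,k} c_{j}c_{k}\,\mathcal{E}(h_{j}+h_{k})\bigl(e^{\langle h_{j},h_{k}\rangle_{H}}-e^{\alpha\langle h_{j},h_{k}\rangle_{H}}\bigr).
\]
A short computation parallel to the proof of Proposition~\ref{convolution measures} gives the Laplace-type identity $\int_{W}\mathcal{E}(g)\,d\rho=\int_{W}e^{\langle u,g\rangle}\,d\nu(u)$ for every $g\in H$, whose right-hand side is finite at $g=h_{j}+h_{k}$ since $\mathcal{E}(g)\in\mathcal{L}^{p}(W,\mu)$ for all $p$ and $\xi\in\mathcal{L}^{(3+\alpha)/(1+\alpha)}(W,\mu)$ by Nelson applied to $\xi\in\mathcal{G}_{\sqrt{2/(1+\alpha)}}$. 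Expanding $e^{t}-e^{\alpha t}=\sum_{n\geq 1}(1-\alpha^{n})t^{n}/n!$ with $t=\langle h_{j},h_{k}\rangle_{H}$, interchanging the finite $j,k$ sum with the $\nu$-integral, and using $\langle h_{j},h_{k}\rangle_{H}^{n}=\langle h_{j}^{\otimes n},h_{k}^{\otimes n}\rangle_{H^{\otimes n}}$, one arrives at
\[
\int_{W}f^{2}\,d\rho-\int_{W}(f\circ_{\alpha}f)\,d\rho=\sum_{n\geq 1}\frac{1-\alpha^{n}}{n!}\int_{W}\Bigl\|\sum_{j}c_{j}e^{\langle u,h_{j}\rangle}\,h_{j}^{\otimes n}\Bigr\|_{H^{\otimes n}}^{2}d\nu(u)\geq 0,
\]
since $1-\alpha^{n}\geq 0$ for $\alpha\in(0,1]$.

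For the density extension both sides must be continuous on $\mathcal{L}^{3+\alpha}(W,\mu)$. The first functional is bounded by H\"older with exponents $3+\alpha,\,3+\alpha,\,(3+\alpha)/(1+\alpha)$, invoking $\xi\in\mathcal{L}^{(3+\alpha)/(1+\alpha)}$. For the second I apply Theorem~\ref{aurel} with $p=q=3+\alpha$: condition (\ref{condition on parameters}) forces $r=(3+\alpha)/(1+\alpha)$, so (\ref{holder}) gives $\Gamma(\lambda)(f\circ_{\alpha}f)\in\mathcal{L}^{(3+\alpha)/(1+\alpha)}$ with $\lambda=\sqrt{(1+\alpha)/2}$ and norm $\leq\|f\|_{3+\alpha}^{2}$. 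Since $1/\lambda=\sqrt{2/(1+\alpha)}$, the hypothesis $\xi\in\mathcal{G}_{1/\lambda}$ yields $\Gamma(1/\lambda)\xi\in\mathcal{L}^{2}$, and the symmetry of the second quantization furnishes
\[
\int_{W}(f\circ_{\alpha}f)\,d\rho=\int_{W}\Gamma(\lambda)(f\circ_{\alpha}f)\cdot\Gamma(1/\lambda)\xi\,d\mu;
\]
a final H\"older with exponents $(3+\alpha)/(1+\alpha)$ and $(3+\alpha)/2$ (using $\mathcal{L}^{2}\subset\mathcal{L}^{(3+\alpha)/2}$ since $\alpha\leq 1$) closes the argument.

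The main obstacle is the delicate calibration of exponents: the exact match $1/\lambda=\sqrt{2/(1+\alpha)}$ between the smoothness threshold on $\xi$ and the image of Theorem~\ref{aurel} is precisely what makes the self-adjoint manipulation succeed, and verifying this symmetry step rigorously (for instance by first truncating $\xi$ to finite chaos and passing to the limit) constitutes the technical core of the density extension.
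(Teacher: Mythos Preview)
Your proof is correct, but it takes a different route from the paper's. The paper does \emph{not} argue first on $\mathcal{E}$ and then extend by density; instead it proves~(\ref{left inequality}) directly for every $f\in\mathcal{L}^{3+\alpha}(W,\mu)$ by a Jensen argument. Writing $\int_W(f\circ_\alpha f)\,d\rho=\int_W(\Gamma(\sqrt{\alpha})f)^2\cdot\Gamma(1/\sqrt{\alpha})\xi\,d\mu$, the paper uses that $\Gamma(\sqrt{\alpha})$ is an averaging operator (the Mehler representation of $P_\tau$) to invoke the pointwise Jensen inequality $(\Gamma(\sqrt{\alpha})f)^2\le\Gamma(\sqrt{\alpha})(f^2)$, and combines this with the strong positivity of~$\xi$ established in Remark~\ref{strong positivity} (i.e.\ $\Gamma(1/\sqrt{\alpha})\xi\ge 0$ in the distributional sense) to conclude. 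No expansion on stochastic exponentials is needed there; the density of~$\mathcal{E}$ plays no role in the paper's argument for Proposition~\ref{left inequality theorem}.

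Your approach is a legitimate alternative and arguably more self-contained: it bypasses the strong-positivity machinery of Remark~\ref{strong positivity} and instead exhibits the left-hand side of~(\ref{left inequality}) explicitly as a non-negative series $\sum_{n\ge 1}\frac{1-\alpha^n}{n!}\int_W\bigl\|\sum_j c_j e^{\langle u,h_j\rangle}h_j^{\otimes n}\bigr\|^2 d\nu(u)$, which is a pleasant structural identity. The price is the density step you then need. For that step, note that the paper (in its own integrability discussion) invokes Theorem~\ref{aurel} with $p=q=2(1+\alpha)$ and $r=2$, landing directly in~$\mathcal{L}^2$ and making the duality between $\mathcal{G}_{\sqrt{(1+\alpha)/2}}$ and $\mathcal{G}_{\sqrt{2/(1+\alpha)}}$ immediate; your choice $p=q=3+\alpha$, $r=(3+\alpha)/(1+\alpha)$ also works but requires the extra embedding $\mathcal{L}^2\subset\mathcal{L}^{(3+\alpha)/2}$ to close the H\"older pairing. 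Either calibration is fine, and the self-adjoint identity you flag as the ``technical core'' is unproblematic once both factors are in conjugate $\mathcal{L}^p$ spaces (it holds on~$\mathcal{E}$ and passes to the limit by the continuity you have already established).
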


\begin{proof}
First of all observe that the integrals appearing in the left hand side of (\ref{left inequality}) are finite. In fact, by the Nelson hyper-contractive inequality we deduce that
\begin{eqnarray*}
\Vert\xi\Vert_{\frac{3+\alpha}{1+\alpha}}&=&\Big\Vert\Gamma\big(\sqrt{(1+\alpha)/2}\big)\Gamma\big(\sqrt{2/(1+\alpha)}\big)\xi\Big\Vert_{\frac{3+\alpha}{1+\alpha}}\\
&\leq&\Big\Vert\Gamma\big(\sqrt{2/(1+\alpha)}\big)\xi\Big\Vert_2\\
&<&+\infty
\end{eqnarray*}
which implies that $\xi\in\mathcal{L}^{\frac{3+\alpha}{1+\alpha}}(W,\mu)$. Therefore, using H\"older inequality we get
\begin{eqnarray*}
\int_W|f(w)|^2d\rho(w)&=&\int_W|f(w)|^2\cdot\xi(w) d\mu(w)\\
&\leq&\Big(\int_W|f(w)|^{3+\alpha}d\rho(w)\Big)^{\frac{2}{3+\alpha}}\cdot\Vert\xi\Vert_{\frac{3+\alpha}{1+\alpha}}\\
&=&\Vert f\Vert^2_{3+\alpha}\cdot\Vert\xi\Vert_{\frac{3+\alpha}{1+\alpha}}
\end{eqnarray*} 
where $\frac{3+\alpha}{2}$ is the conjugate exponent of $\frac{3+\alpha}{1+\alpha}$. This shows the finiteness of the first integral in (\ref{left inequality}). Concerning the second integral, note that for $\alpha\leq1$ one has $2(1+\alpha)\leq 3+\alpha$ which implies $\mathcal{L}^{3+\alpha}(W,\mu)\subset\mathcal{L}^{2(1+\alpha)}(W,\mu)$. Now choosing $p=q=2(1+\alpha)$ and $r=2$ in (\ref{condition on parameters}) we get from (\ref{holder}) that
\begin{eqnarray*}
\Big\Vert\Gamma\big(\sqrt{(1+\alpha)/2}\big)(f\circ_{\alpha}f)\Big\Vert_2\leq\Vert f\Vert^2_{2(1+\alpha)}
\end{eqnarray*}
which implies that $f\circ_{\alpha}f\in\mathcal{G}_{\sqrt{(1+\alpha)/2}}$ (under our assumption on $f$). Therefore, the integral 
\begin{eqnarray*}
\int_W (f\circ_{\alpha}f)(w)d\rho(w)=\int_W (f\circ_{\alpha}f)(w)\cdot\xi(w)d\mu(w) 
\end{eqnarray*}
is finite if $\xi\in\mathcal{G}_{\sqrt{2/(1+\alpha)}}$.\\
To prove inequality (\ref{left inequality}) we recall  (see Remark \ref{strong positivity} above) that the function $\xi$, being the density of a convolution measure, is strongly positive, i.e. $\Gamma(1/\sqrt{\alpha})\xi\geq 0$ (in distributional sense) for any $\alpha>0$. Hence using the definition of $f\circ_{\alpha}f$ we can write
\begin{eqnarray*}
\int_W (f\circ_{\alpha}f)(w)d\rho(w)&=&\int_W (f\circ_{\alpha}f)(w)\xi(w)d\mu(w)\\
&=&\int_W \Gamma(1/\sqrt{\alpha})(\Gamma(\sqrt{\alpha})f)^2(w)\cdot\xi(w)d\mu(w)\\
&=&\int_W(\Gamma(\sqrt{\alpha})f)^2(w)\cdot(\Gamma(1/\sqrt{\alpha})\xi)(w)d\mu(w)\\
&\leq&\int_W\Gamma(\sqrt{\alpha})f^2(w)\cdot(\Gamma(1/\sqrt{\alpha})\xi)(w)d\mu(w)\\
&=&\int_Wf^2(w)\cdot\xi(w)d\mu(w)\\
&=&\int_W|f(w)|^2d\rho(w)
\end{eqnarray*}
where in the inequality we utilized the Jensen inequality for the bounded operator $\Gamma(\sqrt{\alpha})$ and the convex function $x\mapsto x^2$. 
\end{proof}

We are now ready to prove the main theorem of the present paper.

\begin{theorem}\label{main theorem}
Let $\nu$ be a probability measure on $(W,\mathcal{B}(W))$ and choose $\alpha\in ]0,1]$. Assume that $\rho:=\mu\ast\nu$ is absolutely continuous with respect to $\mu$ with density $\xi$ belonging to $\mathcal{G}_{\sqrt{2/(1+\alpha)}}$. Then for every $f\in\mathbb{D}^{1,3+\alpha}$ one has
\begin{eqnarray}\label{main inequality}
\int_W|f(w)|^2d\rho(w)-\int_W(f\circ_{\alpha}f)(w)d\rho(w)\leq (1-\alpha)\int_W\Vert Df(w)\Vert_H^2d\rho(w)
\end{eqnarray}
or equivalently,
\begin{eqnarray}\label{main inequality 2}
\int_W|f(w)|^2d\rho(w)-\int_W|(\Gamma(\sqrt{\alpha})f)(w)|^2\cdot(\Gamma(1/\sqrt{\alpha})\xi)(w)d\mu(w)\nonumber\\
\leq  (1-\alpha)\int_W\Vert Df(w)\Vert_H^2d\rho(w).
\end{eqnarray}
\end{theorem}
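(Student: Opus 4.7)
My plan is to establish (\ref{main inequality}) first for $f$ in the linear span $\mathcal{E}$ of stochastic exponentials---where every function is a test function and every quantity is explicit---and then pass to general $f \in \mathbb{D}^{1,3+\alpha}$ by a density argument.

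For $f \in \mathcal{E}$ the starting point is the dual pairing representation
\[
\int_W (f \circ_s f)\, d\rho = \int_W (\Gamma(\sqrt s) f)^2 \cdot \Gamma(1/\sqrt s)\xi\, d\mu, \qquad s \in (0,1],
\]
which is legitimate as a $\mathcal{G}$--$\mathcal{G}^*$ pairing. I would differentiate both sides in $s$: using the intertwining relation $D\Gamma(\sqrt s) = \sqrt s\, \Gamma(\sqrt s) D$ together with the self-adjointness of $N$ and the carr\'e du champ identity $N(g^2) = 2gNg - 2\|Dg\|_H^2$, the cross terms cancel and one finds
\[
g(s) := \frac{d}{ds}\int_W (f \circ_s f)\, d\rho = \int_W \|\Gamma(\sqrt s) Df\|_H^2 \cdot \Gamma(1/\sqrt s)\xi\, d\mu
\]
(a formula which can also be verified by direct expansion on $f = \sum_j c_j \mathcal{E}(h_j)$). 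Since $\Gamma(\sqrt s)$ is a Markov operator for $s \in (0,1]$, Jensen's inequality applied to the $H$-valued function $Df$ with the convex map $v \mapsto \|v\|_H^2$ gives $\|\Gamma(\sqrt s) Df\|_H^2 \leq \Gamma(\sqrt s)(\|Df\|_H^2)$ pointwise, both sides lying in $\mathcal{G}$; pairing with the strongly positive element $\Gamma(1/\sqrt s)\xi \in \mathcal{G}^*$ (Remark \ref{strong positivity}) preserves the inequality, and the identity $\Gamma(\sqrt s) \Gamma(1/\sqrt s) = \mathrm{Id}$ reduces the right-hand side to $\int_W \|Df\|_H^2\, d\rho$. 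Thus $g(s) \leq \int_W \|Df\|_H^2\, d\rho$ for every $s \in (0,1]$, and integrating from $\alpha$ to $1$ (using $f\circ_1 f = f^2$) yields (\ref{main inequality}) on $\mathcal{E}$.

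To extend to $\mathbb{D}^{1,3+\alpha}$, I would approximate $f$ by $f_n \in \mathcal{E}$ in the $\|\cdot\|_{1,3+\alpha}$ norm. The terms $\int f_n^2\, d\rho$ and $\int \|Df_n\|_H^2\, d\rho$ are continuous by H\"older's inequality since $\xi \in \mathcal{L}^{(3+\alpha)/(1+\alpha)}(W,\mu)$ (which follows from $\xi \in \mathcal{G}_{\sqrt{2/(1+\alpha)}}$ by hyper-contractivity, exactly as exploited in the proof of Proposition \ref{left inequality theorem}). For the middle term I would invoke Theorem \ref{aurel} with $p=q=2(1+\alpha)$, $r=2$: combining it with Cauchy--Schwarz against $\Gamma(\sqrt{2/(1+\alpha)})\xi \in \mathcal{L}^2$ and the self-adjointness of $\Gamma$, one obtains
\[
\Big|\int_W (\varphi \circ_\alpha \psi)\, d\rho\Big| \leq \|\varphi\|_{2(1+\alpha)}\, \|\psi\|_{2(1+\alpha)}\, \|\xi\|_{\mathcal{G}_{\sqrt{2/(1+\alpha)}}},
\]
and then the bilinearity of $\circ_\alpha$ together with the embedding $\mathcal{L}^{3+\alpha} \hookrightarrow \mathcal{L}^{2(1+\alpha)}$ (valid since $\alpha \leq 1$) produces the continuity needed to pass to the limit. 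The main obstacle is the rigorous manipulation of the dual pairing $\langle\langle \Gamma(1/\sqrt s)\xi,\,\cdot\,\rangle\rangle$ during the differentiation step: this is why the core computation is first carried out on $\mathcal{E}$, where every function belongs to every $\mathcal{G}_\lambda$, and why the condition $\xi \in \mathcal{G}_{\sqrt{2/(1+\alpha)}}$ is precisely tuned to allow the density extension.
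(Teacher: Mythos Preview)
Your proof is correct, and the density extension to $\mathbb{D}^{1,3+\alpha}$ matches the paper exactly, but your argument on $\mathcal{E}$ follows a genuinely different route from the authors'.

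The paper proceeds purely algebraically: for $f=\sum_j\lambda_j\mathcal{E}(h_j)$ one computes
\[
\int_W T_\alpha(f)\,d\rho=\sum_{j,k}\lambda_j\lambda_k\,a_{jk}\,b_{jk},
\]
where $a_{jk}=e^{\alpha\langle h_j,h_k\rangle_H}-e^{\langle h_j,h_k\rangle_H}+(1-\alpha)\langle h_j,h_k\rangle_H e^{\langle h_j,h_k\rangle_H}$ and $b_{jk}=\int_W\mathcal{E}(h_j)\diamond\mathcal{E}(h_k)\,d\rho$. The matrix $(a_{jk})$ is positive semi-definite because the classical Beckner inequality (\ref{beckner}) for the Gaussian $\mu$ says exactly this; the matrix $(b_{jk})$ is positive semi-definite by Proposition~\ref{convolution measures}. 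The Schur (Hadamard) product theorem then gives $\sum\lambda_j\lambda_k a_{jk}b_{jk}\ge 0$, which is the inequality. So the paper uses Beckner's inequality for $\mu$ as an input and ``tensors'' it with the convolution-measure positivity via the Schur product.

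Your approach is analytic rather than algebraic: you differentiate $s\mapsto\int_W(f\circ_s f)\,d\rho$ and, via the carr\'e du champ identity, obtain $g(s)=\langle\langle\Gamma(1/\sqrt{s})\xi,\|\Gamma(\sqrt{s})Df\|_H^2\rangle\rangle$; then Jensen for the Markov operator $\Gamma(\sqrt{s})$ together with the strong positivity of $\Gamma(1/\sqrt{s})\xi$ (Remark~\ref{strong positivity}) yields $g(s)\le\int_W\|Df\|_H^2\,d\rho$, and integration in $s$ gives the result. This is self-contained---it does not call on the scalar Beckner inequality as a black box---and is closer in spirit to the argument the authors use in Proposition~\ref{left inequality theorem}; indeed your proof essentially upgrades that Jensen/strong-positivity mechanism from a one-shot estimate to a pointwise-in-$s$ derivative bound. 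The paper's route, by contrast, makes transparent the ``product'' structure (classical inequality $\times$ convolution positivity) and shows that any improvement of the classical $(a_{jk})$-matrix would propagate immediately to $\rho$.
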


\begin{remark}
Observe that for $\nu=\delta_0$, the Dirac measure concentrated at $0\in W$, the measure $\rho$ coincides with $\mu$ implying that $\xi\equiv 1$ and in particular $\Gamma(1/\sqrt{\alpha})\xi\equiv 1$. Inserting these quantities in (\ref{main inequality 2}) we recover the Beckner's type Poincar\'e inequality (\ref{beckner}).   
\end{remark}

\begin{proof}
For any $\alpha\in ]0,1]$ define the map
\begin{eqnarray}\label{T}
T_{\alpha}:\mathcal{E}&\to&\mathcal{E}\nonumber\\
f&\mapsto& T_{\alpha}(f):=f\circ_{\alpha} f-|f|^2+(1-\alpha)\Vert Df\Vert_H^2.
\end{eqnarray}
Since $f\in\mathcal{E}$ we can write
$f=\sum_{j=1}^n\lambda_j\mathcal{E}(h_j)$ for some
$\lambda_1,...,\lambda_n\in\mathbb{R}$ and $h_1,...,h_n\in H$. Now
substitute this expression into (\ref{T}) to obtain (recall identity (\ref{exponential alpha})),
\begin{eqnarray*}
T_{\alpha}(f)&=&\sum_{j,k=1}^n\lambda_j\lambda_k\mathcal{E}(h_j)\circ_{\alpha}\mathcal{E}(h_k)-
\sum_{j,k=1}^n\lambda_j\lambda_k\mathcal{E}(h_j)\cdot\mathcal{E}(h_k)\\
&&+(1-\alpha)\sum_{j,k=1}^n\lambda_j\lambda_k\mathcal{E}(h_j)\cdot\mathcal{E}(h_k)\langle
h_j,h_k\rangle_H\\
&=&\sum_{j,k=1}^n\lambda_j\lambda_k\mathcal{E}(h_j)\diamond\mathcal{E}(h_k)e^{\alpha\langle h_j,h_k\rangle_H}-
\sum_{j,k=1}^n\lambda_j\lambda_k\mathcal{E}(h_j)\diamond\mathcal{E}(h_k)e^{\langle h_j,h_k\rangle_H}\\
&&+(1-\alpha)\sum_{j,k=1}^n\lambda_j\lambda_k\mathcal{E}(h_j)\diamond\mathcal{E}(h_k)e^{\langle
h_j,h_k\rangle_H}\langle
h_j,h_k\rangle_H\\
&=&\sum_{j,k=1}^n\lambda_j\lambda_k\mathcal{E}(h_j)\diamond\mathcal{E}(h_k)\Big(e^{\alpha\langle h_j,h_k\rangle_H}-e^{\langle
h_j,h_k\rangle_H}+(1-\alpha)e^{\langle h_j,h_k\rangle_H}\langle
h_j,h_k\rangle_H\Big).
\end{eqnarray*}
We now integrate with respect to the measure $\rho$ the
first and last terms of the previous chain of equalities to obtain
\begin{eqnarray}\label{equality}
&&\int_WT_{\alpha}(f)(w)d\rho(w)\nonumber\\
&=&\sum_{j,k=1}^n\lambda_j\lambda_k\Big(e^{\alpha\langle h_j,h_k\rangle_H}-e^{\langle
h_j,h_k\rangle_H}+(1-\alpha)e^{\langle h_j,h_k\rangle_H}\langle
h_j,h_k\rangle_H\Big)\int_W(\mathcal{E}(h_j)\diamond\mathcal{E}(h_k))(w)d\rho(w)\nonumber\\
&=&\sum_{j,k=1}^n\lambda_j\lambda_ka_{jk}b_{jk},
\end{eqnarray}
where for $j,k\in\{1,...,n\}$ we set
\begin{eqnarray*}
a_{jk}:=e^{\alpha\langle h_j,h_k\rangle_H}-e^{\langle h_j,h_k\rangle_H}+(1-\alpha)e^{\langle
h_j,h_k\rangle_H}\langle h_j,h_k\rangle_H
\end{eqnarray*}
and
\begin{eqnarray*}
b_{jk}:=\int_W(\mathcal{E}(h_j)\diamond\mathcal{E}(h_k))(w)d\rho(w).
\end{eqnarray*}
Observe that the matrix $A=\{a_{jk}\}_{1\leq j,k\leq n}$ is positive
semi-definite; in fact, if in the Beckner's type Poincar\'e inequality
\begin{eqnarray*}
\int_W|f(w)|^2d\mu(w)-\int_W|(\Gamma(\sqrt{\alpha})f)(w)|^2d\mu(w)\leq  (1-\alpha)\int_W\Vert Df(w)\Vert_H^2d\mu(w)
\end{eqnarray*}
we take $f$ to be $\sum_{j=1}^n\lambda_j\mathcal{E}(h_j)$ one gets
\begin{eqnarray*}
\sum_{j,k=1}^n\lambda_j\lambda_k(e^{\langle h_j,h_k\rangle_H}-e^{\alpha\langle h_j,h_k\rangle_H})\leq
(1-\alpha)\sum_{j,k=1}^n\lambda_j\lambda_ke^{\langle h_j,h_k\rangle_H}\langle
h_j,h_k\rangle_H,
\end{eqnarray*}
which corresponds exactly to what we are claiming. On the other
hand, from Proposition \ref{convolution measures} the matrix $B=\{b_{jk}\}_{1\leq j,k\leq n}$ is positive
semi-definite . Therefore the matrix $A\Box B:=\{a_{jk}\cdot b_{jk}\}_{1\leq
j,k\leq n}$ (which corresponds to the Hadamard product of the matrix
$A$ with the matrix $B$) is also positive semi-definite (see for
instance Styan \cite{S}), that means
\begin{eqnarray*}
\sum_{j,k=1}^n\lambda_j\lambda_ka_{jk}b_{jk}\geq 0,
\end{eqnarray*}
for any $\lambda_1,...,\lambda_n\in\mathbb{R}$. From
(\ref{equality}) this corresponds to
\begin{eqnarray*}
\int_WT_{\alpha}(f)(w)d\rho(w)\geq 0\quad\mbox{ for all }f\in\mathcal{E}.
\end{eqnarray*}
Recalling the definition of $T_{\alpha}$ this is equivalent to
\begin{eqnarray*}
\int_W(f\circ_{\alpha}f)(w)d\rho(w)-\int_W|f(w)|^2d\rho(w)+(1-\alpha)\int_W\Vert Df(w)\Vert_H^2d\rho(w)\geq 0.
\end{eqnarray*}
which proves inequality (\ref{main
inequality}) for $f\in\mathcal{E}$.\\
The next step is to extend the validity of the last inequality to the whole $\mathbb{D}^{1,3+\alpha}$.\\
Since the measure $\rho$ is assumed to be absolutely continuous with respect to $\mu$ with density $\xi$ belonging to $\mathcal{G}_{\sqrt{2/(1+\alpha)}}\subset\mathcal{L}^{\frac{3+\alpha}{1+\alpha}}(W,\mu)$ we can control, via the H\"older inequality, the quantities
\begin{eqnarray*}
\int_W|f(w)|^2d\rho(w)\quad\mbox{ and }\quad\int_W\Vert Df(w)\Vert_H^2d\rho(w)
\end{eqnarray*}  
with
\begin{eqnarray*}
\Vert f\Vert_{3+\alpha}\quad\mbox{ and }\quad\Vert \Vert Df\Vert_H\Vert_{3+\alpha}
\end{eqnarray*}  
respectively, and exploit the density of the set $\mathcal{E}$ in $\mathbb{D}^{1,3+\alpha}$. Moreover,  Theorem \ref{aurel} guarantees that for any $\alpha\in [0,1]$ the bilinear map
\begin{eqnarray*}
(f,g)\mapsto f\circ_{\alpha}g
\end{eqnarray*}
is continuous from $\mathcal{L}^{3+\alpha}(W,\mu)\times\mathcal{L}^{3+\alpha}(W,\mu)$ into $\mathcal{G}_{\sqrt{(1+\alpha)/2}}$. This fact, together with the density of $\mathcal{E}$ in $\mathcal{L}^{3+\alpha}(W,\mu)$, completes the proof of (\ref{main inequality}).\\
Inequality (\ref{main inequality 2}) follows in the same manner through the self-adjointness of the operator $\Gamma(1/\sqrt{\alpha})$.
\end{proof}

\subsection{The finite dimensional case and a point-wise covariance inequality}

In the previous section we proved Theorem \ref{main theorem} under the assumptions that $\rho$ is a probability measure of convolution type, i.e. of the form $\rho=\mu\star\nu$, on a general abstract Wiener space with reference Gaussian measure $\mu$ and that $\rho$ is absolutely continuous with respect to $\mu$ with a density $\xi$ belonging to $\mathcal{G}_{\sqrt{2/(1+\alpha)}}$. \\
We now want to focus on finite dimensional abstract Wiener spaces and give easy-to-check sufficient conditions on $\nu$ which guarantee the existence of the above mentioned smooth density.\\
To this aim, consider the abstract Wiener space $W=H=\mathbb{R}^n$ with 
\begin{eqnarray}\label{finite dimensional gaussian}
\mu(A)=\int_A(2\pi)^{-\frac{n}{2}}\exp\Big\{-\frac{|w|^2}{2}\Big\}dw,\quad  A\in\mathcal{B}(\mathbb{R}^n)
\end{eqnarray}
where $|\cdot|$ denotes the Euclidean norm on $\mathbb{R}^n$. Let $\nu$ be a probability measure on $(\mathbb{R}^n,\mathcal{B}(\mathbb{R}^n))$ and define $\rho:=\mu\star\nu$. It is easy to see that the assumption of absolute continuity of $\rho$ with respect to $\mu$ is automatically verified in this finite dimensional framework and that
\begin{eqnarray*}
\xi(w)&:=&\frac{d\rho}{d\mu}(w)\\
&=&\int_{\mathbb{R}^n}\exp\Big\{\langle w,y\rangle-\frac{|y|^2}{2}\Big\}d\nu(y),\quad w\in\mathbb{R}^n.
\end{eqnarray*} 
Observe in addition that for each $y\in\mathbb{R}^n$ the function
\begin{eqnarray*}
w\mapsto \exp\Big\{\langle w,y\rangle-\frac{|y|^2}{2}\Big\}
\end{eqnarray*}
plays the role of stochastic exponential in the abstract Wiener space under consideration.  We have the following.
\begin{proposition}
Let $\nu$ be a probability measure on $(\mathbb{R}^n,\mathcal{B}(\mathbb{R}^n))$  and assume that 
\begin{eqnarray*}
\int_{\mathbb{R}^n}\exp\Big\{\frac{\lambda^2|y|^2}{2}\Big\}d\nu(y)<+\infty,\quad\mbox{ for some }\lambda>1.
\end{eqnarray*}
Then the probability measure $\rho:=\mu\star\nu$ is absolutely continuous with respect to $\mu$ with density belonging to $\mathcal{G}_{\lambda}$. 
\end{proposition}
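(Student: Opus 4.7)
The plan is to realize the density $\xi$ as a Bochner integral valued in the Hilbert space $\mathcal{G}_{\lambda}$ and then invoke Minkowski's inequality to transfer the exponential integrability hypothesis on $\nu$ into a bound on $\|\xi\|_{\mathcal{G}_{\lambda}}$.

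First, I would record the $\mathcal{G}_{\lambda}$-norm of a single stochastic exponential. For fixed $y\in\mathbb{R}^n$ the function $\mathcal{E}(y)(w)=\exp\{\langle w,y\rangle-|y|^2/2\}$ admits the chaos expansion with kernels $f_k=y^{\otimes k}/k!$, so
\begin{eqnarray*}
\|\mathcal{E}(y)\|_{\mathcal{G}_{\lambda}}^2=\sum_{k\geq 0}k!\,\lambda^{2k}\,\frac{|y|^{2k}}{(k!)^2}=\sum_{k\geq 0}\frac{(\lambda^2|y|^2)^k}{k!}=e^{\lambda^2|y|^2}.
\end{eqnarray*}
In particular $\|\mathcal{E}(y)\|_{\mathcal{G}_{\lambda}}=e^{\lambda^2|y|^2/2}$, which is precisely the integrand in the assumption.

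Next, I would observe that the pointwise formula for the density rewrites as
\begin{eqnarray*}
\xi(w)=\int_{\mathbb{R}^n}\exp\Big\{\langle w,y\rangle-\frac{|y|^2}{2}\Big\}d\nu(y)=\int_{\mathbb{R}^n}\mathcal{E}(y)(w)\,d\nu(y),
\end{eqnarray*}
so it is natural to view the map $y\mapsto\mathcal{E}(y)$ as taking values in $\mathcal{G}_{\lambda}$. This map is continuous (it is the composition of the continuous map $y\mapsto y^{\otimes k}/k!$ into each chaos summand, with uniform control via the expansion above), hence Bochner measurable. By the previous step together with the standing assumption,
\begin{eqnarray*}
\int_{\mathbb{R}^n}\|\mathcal{E}(y)\|_{\mathcal{G}_{\lambda}}\,d\nu(y)=\int_{\mathbb{R}^n}e^{\lambda^2|y|^2/2}\,d\nu(y)<+\infty,
\end{eqnarray*}
so the Bochner integral $\tilde{\xi}:=\int_{\mathbb{R}^n}\mathcal{E}(y)\,d\nu(y)\in\mathcal{G}_{\lambda}$ exists, and Minkowski's inequality for Bochner integrals yields
\begin{eqnarray*}
\|\tilde{\xi}\|_{\mathcal{G}_{\lambda}}\leq\int_{\mathbb{R}^n}e^{\lambda^2|y|^2/2}\,d\nu(y)<+\infty.
\end{eqnarray*}

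The only point that needs a brief argument is the identification $\tilde{\xi}=\xi$ as elements of $\mathcal{L}^2(\mathbb{R}^n,\mu)$. This is routine: since $\mathcal{G}_{\lambda}\hookrightarrow\mathcal{L}^2(\mathbb{R}^n,\mu)$ continuously, the same $\tilde{\xi}$ is also the Bochner integral of $y\mapsto\mathcal{E}(y)$ in $\mathcal{L}^2(\mathbb{R}^n,\mu)$, and testing against $\mathcal{L}^2$-functions (or applying Fubini after multiplication by a bounded test function and using the pointwise definition of $\xi$) shows that $\tilde{\xi}$ and $\xi$ coincide $\mu$-almost everywhere. I expect this identification to be the only mildly technical point; the heart of the argument is the clean computation $\|\mathcal{E}(y)\|_{\mathcal{G}_{\lambda}}=e^{\lambda^2|y|^2/2}$ combined with Minkowski, which explains why the exponent $\lambda^2/2$ (and not $\lambda^2$) appears in the integrability hypothesis.
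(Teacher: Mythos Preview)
Your argument is correct and is essentially the same as the paper's: both express $\xi$ as an integral of stochastic exponentials and bound $\|\xi\|_{\mathcal{G}_{\lambda}}$ via Minkowski's inequality together with the identity $\|\mathcal{E}(y)\|_{\mathcal{G}_{\lambda}}=e^{\lambda^2|y|^2/2}$. The only cosmetic difference is that the paper obtains this norm by writing $\|\mathcal{E}(y)\|_{\mathcal{G}_{\lambda}}=\|\Gamma(\lambda)\mathcal{E}(y)\|_2=\|\mathcal{E}(\lambda y)\|_2$ rather than summing the chaos series, and it does not spell out the Bochner-integral identification you (rightly) flag as routine.
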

\begin{proof}
We have only to check the membership of
\begin{eqnarray*}
\xi(w)=\int_{\mathbb{R}^n}\exp\Big\{\langle w,y\rangle-\frac{|y|^2}{2}\Big\}d\nu(y),\quad w\in\mathbb{R}^n
\end{eqnarray*}
to the space $\mathcal{G}_{\lambda}$. Using the Minkowski inequality we get that
\begin{eqnarray*}
\Vert\xi\Vert_{\mathcal{G}_{\lambda}}&=&\Vert\Gamma(\lambda)\xi\Vert_2\\
&=&\Big\Vert\Gamma(\lambda)\int_{\mathbb{R}^n}\exp\Big\{\langle \cdot,y\rangle-\frac{|y|^2}{2}\Big\}d\nu(y)\Big\Vert_2\\
&=&\Big\Vert\int_{\mathbb{R}^n}\exp\Big\{\langle \cdot,\lambda y\rangle-\frac{\lambda^2|y|^2}{2}\Big\}d\nu(y)\Big\Vert_2\\
&\leq&\int_{\mathbb{R}^n}\Big\Vert\exp\Big\{\langle \cdot,\lambda y\rangle-\frac{\lambda^2|y|^2}{2}\Big\}\Big\Vert_2 d\nu(y)\\
&=&\int_{\mathbb{R}^n}\exp\Big\{\frac{\lambda^2|y|^2}{2}\Big\}d\nu(y)\\
&<&+\infty.
\end{eqnarray*}
\end{proof}
We can therefore rephrase our main theorem with more transparent conditions.
\begin{corollary}\label{corollary}
Let $\alpha\in ]0,1]$ and consider a probability measure $\nu$ on $(\mathbb{R}^n,\mathcal{B}(\mathbb{R}^n))$  such that 
\begin{eqnarray}\label{integrability condition}
\int_{\mathbb{R}^n}\exp\Big\{\frac{|y|^2}{1+\alpha}\Big\}d\nu(y)<+\infty.
\end{eqnarray}
Define in addition $\rho:=\mu\star\nu$. Then for every $f\in\mathbb{D}^{1,3+\alpha}$ one has
\begin{eqnarray*}
\int_{\mathbb{R}^n}|f(w)|^2d\rho(w)-\int_{\mathbb{R}^n}(f\circ_{\alpha}f)(w)d\rho(w)\leq (1-\alpha)\int_{\mathbb{R}^n}\Vert Df(w)\Vert_H^2d\rho(w)
\end{eqnarray*}
or equivalently,
\begin{eqnarray*}
\int_{\mathbb{R}^n}|f(w)|^2d\rho(w)-\int_{\mathbb{R}^n}|(\Gamma(\sqrt{\alpha})f)(w)|^2\cdot(\Gamma(1/\sqrt{\alpha})\xi)(w)d\mu(w)\nonumber\\
\leq  (1-\alpha)\int_{\mathbb{R}^n}\Vert Df(w)\Vert_H^2d\rho(w)
\end{eqnarray*}
where
\begin{eqnarray*}
(\Gamma(1/\sqrt{\alpha})\xi)(w):=\int_{\mathbb{R}^n}\exp\Big\{\langle w,\frac{y}{\sqrt{\alpha}}\rangle-\frac{|y|^2}{2\alpha}\Big\}d\nu(y),\quad w\in\mathbb{R}^n.
\end{eqnarray*}
\end{corollary}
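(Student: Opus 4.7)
The corollary is really a specialization of Theorem \ref{main theorem} to the finite dimensional Wiener space described in (\ref{finite dimensional gaussian}), so my plan is to verify that the hypotheses of Theorem \ref{main theorem} hold under the concrete assumption (\ref{integrability condition}), and then read off the conclusion. The only nontrivial point is checking that the density $\xi=d\rho/d\mu$ lies in $\mathcal{G}_{\sqrt{2/(1+\alpha)}}$; once this is done, the inequalities follow verbatim from the main theorem. I would also use this occasion to identify the explicit integral representation of $\Gamma(1/\sqrt{\alpha})\xi$ announced in the statement.

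For the first step, I set $\lambda:=\sqrt{2/(1+\alpha)}$. Since $\alpha\in\,]0,1]$ we have $\lambda\geq 1$, with strict inequality for $\alpha<1$ (the endpoint $\alpha=1$ is harmless because the theorem collapses to a trivial identity there). The choice of $\lambda$ was made precisely so that $\lambda^2/2=1/(1+\alpha)$; consequently the assumption (\ref{integrability condition}) reads exactly
\begin{eqnarray*}
\int_{\mathbb{R}^n}\exp\!\Big\{\tfrac{\lambda^2|y|^2}{2}\Big\}\,d\nu(y)<+\infty,
\end{eqnarray*}
which is the hypothesis of the preceding proposition. That proposition therefore yields $\xi\in\mathcal{G}_{\lambda}=\mathcal{G}_{\sqrt{2/(1+\alpha)}}$, which is the missing piece needed to apply Theorem \ref{main theorem} to $\rho=\mu\star\nu$.

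The displayed inequalities in the corollary are then immediate specializations of (\ref{main inequality}) and (\ref{main inequality 2}). For the final formula describing $\Gamma(1/\sqrt{\alpha})\xi$, I would argue as in the proof of the previous proposition: writing $\xi(w)=\int_{\mathbb{R}^n}\mathcal{E}(y)(w)\,d\nu(y)$ where $\mathcal{E}(y)(w)=\exp\{\langle w,y\rangle-|y|^2/2\}$, and using that $\Gamma(\kappa)\mathcal{E}(y)=\mathcal{E}(\kappa y)$ (which is immediate from the chaos expansion $\mathcal{E}(y)=\sum_n \delta^n(y^{\otimes n}/n!)$), one obtains, after interchanging $\Gamma(1/\sqrt{\alpha})$ with the integral,
\begin{eqnarray*}
(\Gamma(1/\sqrt{\alpha})\xi)(w)=\int_{\mathbb{R}^n}\exp\!\Big\{\langle w,\tfrac{y}{\sqrt{\alpha}}\rangle-\tfrac{|y|^2}{2\alpha}\Big\}\,d\nu(y).
\end{eqnarray*}

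The only step that requires any care is the interchange of $\Gamma(1/\sqrt{\alpha})$ with the Bochner integral against $\nu$; I expect this to be the main (modest) obstacle, but it is handled exactly by the Minkowski-type computation already carried out in the previous proposition, since the norm bound provided by (\ref{integrability condition}) upgrades the pointwise identity into an $\mathcal{L}^2(W,\mu)$ identity. With this routine justification the corollary is fully proved.
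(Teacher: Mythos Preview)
Your proposal is correct and follows exactly the approach implicit in the paper: the corollary is stated immediately after the proposition on $\mathcal{G}_\lambda$-membership with the words ``We can therefore rephrase our main theorem with more transparent conditions,'' and no further proof is given. Your identification $\lambda=\sqrt{2/(1+\alpha)}$, reduction to the preceding proposition, and invocation of Theorem~\ref{main theorem} are precisely what the paper intends; your explicit justification of the formula for $\Gamma(1/\sqrt{\alpha})\xi$ via $\Gamma(\kappa)\mathcal{E}(y)=\mathcal{E}(\kappa y)$ and Minkowski is in fact slightly more detailed than the paper itself.
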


We conclude the paper with an additional result on convolution measures on $\mathbb{R}^n$. We know from before that,  if $\mu$ is the measure defined in (\ref{finite dimensional gaussian}) and $\nu$ is a probability measure on $(\mathbb{R}^n, \mathcal{B}(\mathbb{R}^n))$, then $\rho:=\mu\star\nu$ is absolutely continuous with respect to $\mu$ with density
\begin{eqnarray*}
\xi(w):=\int_{\mathbb{R}^n}\exp\Big\{\langle w,y\rangle-\frac{|y|^2}{2}\Big\}d\nu(y),\quad w\in\mathbb{R}^n.
\end{eqnarray*} 
We are going to show that functions of this type satisfy a point-wise covariance inequality, that means a point-wise inequality for functions which becomes after integration a covariance inequality in Gaussian spaces. 
\begin{proposition}
Let $\nu_1$ and $\nu_2$ be two probability measures on $(\mathbb{R}^n, \mathcal{B}(\mathbb{R}^n))$ and define for $w\in\mathbb{R}^n$,
\begin{eqnarray*}
\xi_1(w):=\int_{\mathbb{R}^n}\exp\Big\{\langle w,y\rangle-\frac{|y|^2}{2}\Big\}d\nu_1(y)\quad\mbox{ and }\quad\xi_2(w):=\int_{\mathbb{R}^n}\exp\Big\{\langle w,y\rangle-\frac{|y|^2}{2}\Big\}d\nu_2(y).
\end{eqnarray*}
Assume that for some $p>2$ the integrals
\begin{eqnarray}\label{1}
\int_{\mathbb{R}^n}\exp\Big\{(p-1)\frac{|y|^2}{2}\Big\}d\nu_i(y),\quad i=1,2
\end{eqnarray}
are finite. Then
\begin{eqnarray}\label{a}
\xi_1\cdot\xi_2\geq \xi_1\diamond\xi_2+\sum_{k=1}^n\partial_{x_k}\xi_1\diamond\partial_{x_k}\xi_2\quad\mbox{ in}\quad\mathcal{G}^*
\end{eqnarray}
i.e., for any non negative $\varphi\in\mathcal{G}$ one has
\begin{eqnarray}\label{b}
\langle\langle\xi_1\cdot\xi_2,\varphi\rangle\rangle-\langle\langle\xi_1\diamond\xi_2,\varphi\rangle\rangle-\sum_{k=1}^n\langle\langle\partial_{x_k}\xi_1\diamond\partial_{x_k}\xi_2,\varphi\rangle\rangle\geq 0.
\end{eqnarray}
\end{proposition}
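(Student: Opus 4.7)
The strategy is to use the representation $\xi_i(w)=\int_{\mathbb{R}^n}\mathcal{E}(y)(w)\,d\nu_i(y)$ to rewrite each object appearing in (\ref{a}) as a double integral against $d\nu_1(y)\,d\nu_2(z)$ of $\mathcal{E}(y+z)$ with an appropriate scalar weight, so that the claim reduces to an elementary scalar inequality.

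First I would differentiate under the integral sign (legitimate under (\ref{1}) with $p>2$) to obtain
\[\partial_{x_k}\xi_i(w)=\int_{\mathbb{R}^n}y_k\,\mathcal{E}(y)(w)\,d\nu_i(y),\qquad i=1,2,\ k=1,\dots,n.\]
Then, combining the two identities $\mathcal{E}(y)\diamond\mathcal{E}(z)=\mathcal{E}(y+z)$ and $\mathcal{E}(y)\cdot\mathcal{E}(z)=e^{\langle y,z\rangle}\mathcal{E}(y+z)$ with the bilinearity of the pointwise and Wick products and a Fubini-type exchange, I would derive
\begin{align*}
\xi_1\cdot\xi_2 &= \int_{\mathbb{R}^n}\int_{\mathbb{R}^n} e^{\langle y,z\rangle}\,\mathcal{E}(y+z)(w)\,d\nu_1(y)\,d\nu_2(z),\\
\xi_1\diamond\xi_2 &= \int_{\mathbb{R}^n}\int_{\mathbb{R}^n} \mathcal{E}(y+z)(w)\,d\nu_1(y)\,d\nu_2(z),\\
\sum_{k=1}^n\partial_{x_k}\xi_1\diamond\partial_{x_k}\xi_2 &= \int_{\mathbb{R}^n}\int_{\mathbb{R}^n}\langle y,z\rangle\,\mathcal{E}(y+z)(w)\,d\nu_1(y)\,d\nu_2(z).
\end{align*}
Subtracting yields the key identity
\[\xi_1\xi_2-\xi_1\diamond\xi_2-\sum_{k=1}^n\partial_{x_k}\xi_1\diamond\partial_{x_k}\xi_2 = \int_{\mathbb{R}^n}\int_{\mathbb{R}^n}\bigl(e^{\langle y,z\rangle}-1-\langle y,z\rangle\bigr)\,\mathcal{E}(y+z)(w)\,d\nu_1(y)\,d\nu_2(z).\]

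The elementary inequality $e^t\geq 1+t$ (strict convexity of the exponential with minimum at $t=0$) makes the scalar kernel non-negative, while $\mathcal{E}(y+z)(w)>0$ everywhere. Pairing with any non-negative $\varphi\in\mathcal{G}$ and invoking Fubini then gives (\ref{b}); in fact one gets the stronger assertion that the distribution in (\ref{a}) is represented by a pointwise non-negative function of $w$.

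The main obstacle is technical rather than conceptual: I must justify that the three displayed identities are valid in $\mathcal{G}^*$ and that the Fubini swap with the pairing $\langle\langle\cdot,\varphi\rangle\rangle$ is permitted. I would handle this through the estimate $\Vert\mathcal{E}(y+z)\Vert_{\mathcal{G}_\lambda}=e^{\lambda^2|y+z|^2/2}\leq e^{\lambda^2(|y|^2+|z|^2)}$ combined with (\ref{1}) for $p$ slightly greater than $2$ (equivalently $\lambda$ slightly greater than $1$): this produces absolute integrability in the $\mathcal{G}_\lambda$-norm uniformly in $(y,z)$, legitimising each manipulation and the exchange of order of integration.
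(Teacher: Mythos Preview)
Your proposal is correct and follows essentially the same approach as the paper: both arguments express each of the three terms as a double integral of $\mathcal{E}(y+z)$ against $d\nu_1(y)\,d\nu_2(z)$ with respective weights $e^{\langle y,z\rangle}$, $1$, and $\langle y,z\rangle$, and then invoke the elementary inequality $e^t\geq 1+t$ together with the positivity of $\mathcal{E}(y+z)$ and $\varphi$. The only difference is cosmetic---you first establish the pointwise identity and then pair with $\varphi$, whereas the paper pairs first and manipulates inside the pairing; your observation that the inequality actually holds pointwise in $w$ is a small bonus the paper does not state.
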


\begin{proof}
First of all observe that condition (\ref{1}), due to Minkoski integral inequality, guarantees that $\xi_1$ and $\xi_2$ belong to $\mathcal{L}^p(\mathbb{R}^n,\mu)$ with $p>2$ and hence that all the terms appearing in  (\ref{a}) live in the distributional space $\mathcal{G}^*$. Now, for a  non negative $\varphi\in\mathcal{G}$ we can write
\begin{eqnarray*}
\langle\langle\xi_1\cdot\xi_2,\varphi\rangle\rangle&=&\langle\langle \int_{\mathbb{R}^n}\exp\Big\{\langle \cdot,y\rangle-\frac{|y|^2}{2}\Big\}d\nu_1(y)\cdot\int_{\mathbb{R}^n}\exp\Big\{\langle \cdot,y\rangle-\frac{|y|^2}{2}\Big\}d\nu_2(y),\varphi\rangle\rangle\\
&=&\langle\langle \int_{\mathbb{R}^n}\exp\Big\{\langle \cdot,y+z\rangle-\frac{|y+z|^2}{2}\Big\}\exp{\langle y,z\rangle}d\nu_1(y)d\nu_2(z),\varphi\rangle\rangle\\
&=&\int_{\mathbb{R}^n}\langle\langle\exp\Big\{\langle \cdot,y+z\rangle-\frac{|y+z|^2}{2}\Big\},\varphi\rangle\rangle\exp{\langle y,z\rangle}d\nu_1(y)d\nu_2(z)\\
&\geq&\int_{\mathbb{R}^n}\langle\langle\exp\Big\{\langle \cdot,y+z\rangle-\frac{|y+z|^2}{2}\Big\},\varphi\rangle\rangle(1+\langle y,z\rangle)d\nu_1(y)d\nu_2(z)\\
&=&\int_{\mathbb{R}^n}\langle\langle\exp\Big\{\langle \cdot,y+z\rangle-\frac{|y+z|^2}{2}\Big\},\varphi\rangle\rangle d\nu_1(y)d\nu_2(z)\\
&&+\int_{\mathbb{R}^n}\langle\langle\exp\Big\{\langle \cdot,y+z\rangle-\frac{|y+z|^2}{2}\Big\},\varphi\rangle\rangle\langle y,z\rangle d\nu_1(y)d\nu_2(z)\\
&=&\langle\langle \int_{\mathbb{R}^n}\exp\Big\{\langle \cdot,y\rangle-\frac{|y|^2}{2}\Big\}d\nu_1(y)\diamond\int_{\mathbb{R}^n}\exp\Big\{\langle \cdot,y\rangle-\frac{|y|^2}{2}\Big\}d\nu_2(y),\varphi\rangle\rangle\\
&&\sum_{k=1}^n\langle\langle\partial_{x_k}\int_{\mathbb{R}^n}\exp\Big\{\langle \cdot,y\rangle-\frac{|y|^2}{2}\Big\}d\nu_1(y)\diamond\partial_{x_k}\int_{\mathbb{R}^n}\exp\Big\{\langle \cdot,y\rangle-\frac{|y|^2}{2}\Big\}d\nu_2(y),\varphi\rangle\rangle\\
&=&\langle\langle\xi_1\diamond\xi_2,\varphi\rangle\rangle-\sum_{k=1}^n\langle\langle\partial_{x_k}\xi_1\diamond\partial_{x_k}\xi_2,\varphi\rangle\rangle.
\end{eqnarray*}
The proof is complete.
\end{proof}

\begin{remark}
If in (\ref{b}) we take $\varphi\equiv 1$ and assume $\xi_1$ and $\xi_2$ belonging to $\mathbb{D}^{1,p}$ then we obtain
\begin{eqnarray*}
\int_{\mathbb{R}^n}\xi_1(w)\cdot\xi_2(w)d\mu(w)&\geq&\int_{\mathbb{R}^n}\xi_1(w)d\mu(w)\cdot\int_{\mathbb{R}^n}\xi_2(w)d\mu(w)\\
&&+\sum_{k=1}^n\int_{\mathbb{R}^n}\partial_{x_k}\xi_1(w)d\mu(w)\cdot\int_{\mathbb{R}^n}\partial_{x_k}\xi_2(w)d\mu(w)
\end{eqnarray*}
which is the finite dimensional version of the covariance inequality obtained in \cite{Hu} for convex functions. Here we utilized the properties
\begin{eqnarray*}
\langle\langle f,1\rangle\rangle=\int_{\mathbb{R}^n}f(w)d\mu(w)\quad\mbox{ when }\quad f\in\mathcal{L}^{2}(\mathbb{R}^n,\mu)
\end{eqnarray*}
and
\begin{eqnarray*}
\int_{\mathbb{R}^n}(f\diamond g)(w)d\mu(w)=\int_{\mathbb{R}^n}f(w)d\mu(w)\cdot\int_{\mathbb{R}^n}g(w)d\mu(w).
\end{eqnarray*}
\end{remark}

\vspace*{8pt}


\begin{thebibliography}{99}

\bibitem{ABD}
A. Arnold, J.-P. Bartier and J. Dolbeault, Interpolation between logarithmic Sobolev and Poincar\'e inequalities, \emph{Commun. Math. Sci.} {\bf 5} (2007) 971-979.

\bibitem{AMTU}
A. Arnold, P. Markowich, G. Toscani and A. Unterreiter, On logarithmic Sobolev inequalities and the rate of convergence to equilibrium for Fokker-Planck type equations,
\emph{Comm. Partial Differential Equations} {\bf 26} (2001) 43-100.

\bibitem{Beckner} 
W. Beckner, A generalized Poincar\'e inequality for Gaussian measures, \emph{Proc. Amer. Math.
Soc.} {\bf 105} (1989) 397-400.

\bibitem{Bogachev}
V.I. Bogachev, \emph{Gaussian Measures}, American Mathematical Society, Providence, 1998.

\bibitem{CZ}
L.-J. Cheng and S.-Q. Zhang, Weak Poincar\'e Inequality for Convolution
Probability Measures, \emph{arXiv:1407.4910v1}.

\bibitem{Chernoff}
H. Chernoff, A note on an inequality involving the normal
distribution, \emph{Ann. Probab} \textbf{9} (1981) 533-535.

\bibitem{DLS}
P. Da Pelo, A. Lanconelli and A. I. Stan, A H\"older-Young-Lieb
inequality for norms of Gaussian Wick products, \emph{Inf. Dim.
Anal. Quantum Prob. Related Topics} \textbf{14} (2011) 375-407.

\bibitem{DLS 2}
P. Da Pelo, A. Lanconelli and A. I. Stan, An It\^o formula for a
family of stochastic integrals and related Wong-Zakai theorems,
\emph{Stochastic Processes and their Applications} \textbf{123}
(2013) 3183-3200.

\bibitem{Stan}
P. Da Pelo, A. Lanconelli and A.I. Stan, A H\"older-Young inequality for norms of generalized Gaussian Wick products, \emph{Preprint} (2014).

\bibitem{Gross}
L. Gross, Logarithmic Sobolev inequality, \emph{Amer. J. Math.}
\textbf{97} (1975) 1061-1083.

\bibitem{Hu}
Y. Hu,  It\^o-Wiener chaos expansion with exact residual and correlation, variance
inequalities, \emph{J. Theor. Probab.} {\bf 10} (1997) 835-848.

\bibitem{J}
S. Janson, \emph{Gaussian Hilbert spaces}, Cambridge Tracts in
Mathematics, 129. Cambridge University Press, Cambridge, 1997.

\bibitem{L}
A. Lanconelli, A new approach to Poincar\'e-type inequalities on the
Wiener space, \emph{Preprint} (2014).

\bibitem{LO}
R. Latala and K. Oleszkiewicz, Between Sobolev and Poincar\'e, \emph{Lecture Notes in Math.}
{\bf 1745} (2000) 147–168.

\bibitem{Nash}
J. Nash, Continuity of solutions of partial and elliptic equations,
\emph{Amer. J. Math.} \textbf{80} 931-954.

\bibitem{Nelson}
E. Nelson, The free Markoff field, \emph{J . Funct. Anal.} {\bf 12} (1973) 211-227.

\bibitem{Nualart}
D. Nualart, \emph{Malliavin calculus and Related Topics, II
edition}, Springer, New York 2006.

\bibitem{NZ}
D. Nualart and M. Zakai, Positive and strongly positive Wiener
functionals, {\em Barcelona Seminar on Stochastic Analysis Progr.
Probab.} {\bf 32} (1991) 132--146.

\bibitem{PT}
J. Potthoff and M. Timpel, On a dual pair of spaces of smooth and
generalized random variables, {\em Potential Analysis} {\bf 4}
(1995) 637--654.

\bibitem{S}
G. Styan, Hadamard product and multivariate statistical analysis,
\emph{Linear algebra and its applications} \textbf{6} (1973)
217-240.

\bibitem{Wang}
F. Y. Wang, A generalization of Poincar\'e and log-Sobolev inequalities, \emph{Potential Analysis}
{\bf 22} (2005) 1-15.

\bibitem{WW}
F. Y. Wang and J. Wang, Functional inequalities for convolution probability measures, \emph{arXiv:1308.1713}.

\bibitem{Z}
D. Zimmermann, Logarithmic Sobolev inequalities for mollified compactly supported measures, \emph{J. Funct. Anal.} {\bf 265} (2013)
1064-1083.


\end{thebibliography}
\end{document}